\documentclass[11pt]{amsart}

\usepackage[english]{babel}
\usepackage{pgfplots}
\pgfplotsset{compat=1.17}
\usepackage[a4paper, margin=2cm]{geometry}
\usepackage{graphicx}
\usepackage{amsmath, amsthm, amssymb}
\usepackage{enumitem}
\setlist[1,enumerate]{label={(\roman*)}}
\setlist[1]{leftmargin=1.5em}
\setlist[2,enumerate]{label={(\alph*)}}
\setlist{itemsep=0pt, topsep=\smallskipamount, listparindent=1em}
\usepackage{tikz-cd}
\usepackage{hyperref}
\hypersetup{colorlinks=true}

\def\N{\mathbb N}

\def\R{\mathbb R}
\def\C{\mathbb C}

\theoremstyle{plain}
\newtheorem{theorem}{Theorem}

\newtheorem{lemma}[theorem]{Lemma}

\theoremstyle{definition}
\newtheorem{definition}[theorem]{Definition}

\theoremstyle{remark}
\newtheorem{remark}[theorem]{Remark}
\newtheorem{example}[theorem]{Example}

\begin{document}
\title{Hermitian Distance Degree of Unitary-Invariant Matrix Varieties}
\author{Nikhil Ken}

\email{Nikhil.ken@unifi.it}

\begin{abstract}
We study the Hermitian distance degree, a real enumerative invariant counting critical points of the squared Hermitian distance function, for matrix varieties invariant under left and right unitary actions. For such a variety \(M \subset \mathbb{C}^{n\times t}\), we prove that its Hermitian distance degree equals the real Euclidean distance degree of the associated absolutely symmetric variety of singular values. Equivalently, for a generic data matrix, Hermitian distance critical points on \(M\) are obtained by lifting Euclidean distance critical points from the singular-value slice. We also establish a Hermitian slicing theorem, paralleling the Bik--Draisma principle, which reduces the critical point count to a diagonal slice. As a motivating example, we recover a geometric Hermitian analogue of the Eckart-Young theorem.
\end{abstract}

\maketitle

\section{Introduction}
In this article, 
we study a real enumerative invariant attached to Hermitian distance critical points,
which we call the Hermitian distance degree (introduced and studied in detail in \cite{Furchi2025_HermitianDistanceDegree}),
focusing on matrix varieties invariant under left and right unitary actions. Recall that the ED degree of an algebraic variety
$X \subset \mathbb{C}^n$ is the number of critical points in $X^{reg}$ of the squared distance function
\[
d_u(x) = \|x - u\|^2,
\]
arising from a Euclidean inner product and a general data point $u \in \mathbb{C}^n$; see \cite{draisma2016euclidean}, \cite{drusvyatskiy2017euclidean}, \cite{HorobetWeinstein2019_OffsetPersistentHomology}, \cite{Sodomaco2021_ProductSingularValues} and \cite{baaijens2015euclidean}.

In the Hermitian setting, for matrices $A,B \in \mathbb{C}^{n \times t}$, we consider the Hermitian inner product
\[
\langle A,B \rangle = \mathrm{Tr}(A B^*),
\]
where $B^*$ is the conjugate transpose of $B$. The \emph{Hermitian distance degree} is a real enumerative invariant attached to the Hermitian
distance critical points.  Concretely, we view $M\subseteq\C^{n\times t}=V$ as a real algebraic set
in $V^{\R}\cong\R^{2nt}$ and use the real inner product $q=\mathrm{Re}\langle\cdot,\cdot\rangle$. Since we work in $V^{\mathbb R}$ with $q=\mathrm{Re}\langle\cdot,\cdot\rangle$, we count real critical points; complex critical points of the complexified equations are not critical points of the real problem.

For a data matrix $Y\in\C^{n\times t}$, let $d_Y(X)=\|X-Y\|^2$ be the squared distance function
with respect to $q$.  For $Y$ outside a certain semialgebraic discriminant locus, the number of
\emph{real} critical points of $d_Y$ on $M^{\mathrm{reg}}$ is finite and constant on the chamber
of $Y$; we denote this chamber-wise constant by $\mathrm{HDdeg}(M;Y)$. The set of all such chamberwise values is denoted by
\[
\mathrm{HDdeg}(M):=\{\mathrm{HDdeg}(M;Y): Y\in V^{\R}\setminus \Delta_M\}\subset\N.
\]

Let $f: \mathbb{C}^{n\times t} \to \mathbb{R}$ be defined by
\[
f(x) = \|x - y\|^2 = \langle x - y, x - y \rangle,
\]
where $\langle \cdot, \cdot \rangle$ is the Hermitian inner product.
For $f(x)=\|x-y\|^2$ induced by the Hermitian product, the differential satisfies
\[
df_x(v)=\langle v,x-y\rangle+\langle x-y,v\rangle
=2\,\mathrm{Re}\langle x-y,v\rangle .
\]
This observation means that the Hermitian distance problem is governed by the real symmetric bilinear form
\[
(\cdot,\cdot)_{\mathbb{R}} \;:=\; \mathrm{Re}\langle \cdot,\cdot\rangle
\]
on the real vector space \(V^{\mathbb{R}}\). In particular, Hermitian distance critical points are precisely Euclidean distance
critical points for \(X\), viewed as a real algebraic set in \(V^{\mathbb{R}}\) with respect to \((\cdot,\cdot)_{\mathbb{R}}\),
and we count only the \emph{real} critical points of this real optimization problem; hence the relevant enumerative invariant is a
\emph{real} Euclidean distance degree, denoted \(\mathrm{\mathbb{R}EDdeg}\).

In \cite{drusvyatskiy2017euclidean}, the authors study the ED degree of matrix varieties invariant under the action of
orthogonal groups, and show that it coincides with the ED degree of a variety determined by the singular values. This idea was
further generalized in \cite{bik2018eddegrees}, where it was shown that the ED degree of a variety $X$ with a suitable group action
can sometimes be computed by slicing $X$ with an appropriate subspace.

Building on these ideas, we show in Theorem~\ref{main} that the HD degree of a Unitary-invariant matrix variety
$M \subset \mathbb{C}^{n \times t}$ equals the real ED degree of the associated absolutely symmetric variety
$S \subset \mathbb{R}^n$ of singular values. Concretely, for a generic data matrix $Y=U\mathrm{diag}(y)V^*$
, one has
\[
\mathrm{HDdeg}(M;Y)=\mathrm{\mathbb{R}EDdeg}(S;y).
\]
Moreover, this common value is constant on chambers of $\mathbb{R}^n$; when the chamber is clear from context,
we denote it simply by $\mathrm{HDdeg}(M)$ (or equivalently $\mathrm{\mathbb{R}EDdeg}(S)$).

Beyond the numerical equality, Theorem~\ref{main} gives a concrete description of the critical points themselves.  
For a generic data matrix $Y = U\mathrm{diag}(y)V^*$, every Hermitian distance critical point on $M$ is obtained by lifting a real Euclidean distance critical point of $y$ on $S$, using the same singular vectors $U,V$.  
Since the counting problem is genuinely real, the number of critical points can depend on the chamber of $y$, and may jump when $y$ crosses the ED discriminant. We illustrate this behaviour explicitly in Example \ref{paras}, \ref{ex:det-mag-2x2}.

We also prove a Hermitian analogue of the slicing principle of Bik--Draisma by passing to \(V^{\mathbb{R}}\) and exploiting that
unitary representations are orthogonal with respect to \(q=\mathrm{Re}\langle\cdot,\cdot\rangle\).

As a motivating case, we revisit the Eckart-Young theorem (Theorem~\ref{gg}) geometrically for Hermitian distance, recovering the
critical points of the Hermitian distance function to the rank-$k$ determinantal variety (compare \cite{ottaviani_paoletti_svd} for
the Euclidean setting). This becomes a special case of Theorem~\ref{main}. Hermitian distance critical equations also arise in the study of closest product states
and geometric measures of entanglement; see \cite{HillingSudbery2010}.

\subsection* {Conventions}
Throughout we identify the complex vector space $V=\C^{n\times t}$ as a real vector space
\[
V^\R \;\cong\; \R^{2nt},
\]
and we equip $V^\R$ with the real Euclidean inner product
\[
q(A,B):=\mathrm{Re}\langle A,B\rangle \;=\; \mathrm{Re}\big(\mathrm{Tr}(AB^*)\big).
\]
All algebraic subsets of $V$ will be viewed as real algebraic sets in $V^\R$ when counting
(real) distance critical points.
For $n\le t$, we write $\mathrm{diag}(d)$ for the rectangular diagonal matrix in $\C^{n\times t}$
with $(\mathrm{diag}(d))_{ii}=d_i$ for $1\le i\le n$ and all other entries zero.

\medskip
\noindent\textbf{Ambient-space subscripts.}
When we write $\R\mathrm{EDdeg}$ or $\mathrm{HDdeg}$ with a subscript, the subscript indicates
the \emph{ambient real inner-product space} in which the critical points are computed.
When the ambient space is clear from context, we omit the subscript.

\section*{Acknowledgements}
This work was supported by the European Union’s Horizon Europe programme under the Marie Sk\l{}odowska--Curie Actions,
HORIZON--MSCA-2023-DN-JD (grant agreement No.~101120296, TENORS). I thank Giorgio Maria Ottaviani for comments on multiple drafts,
and Davide Furch\`{\i} for helpful discussions on the Hermitian distance degree.

\section{Preliminaries} 

\begin{theorem} \cite{golub2013matrix}(2.4.4)
    Let $ A $ be a matrix in $ \mathbb{C}^{m \times n} $. Then $ A $ can be factored as

$$
A = U \Sigma V^*
$$

where $ U \in \mathbb{C}^{m \times m} $ is unitary, $ V \in \mathbb{C}^{n \times n} $ is unitary, and $ \Sigma \in \mathbb{R}^{m \times n} $ has the form

$$\Sigma = \mathrm{diag}(s_1, s_2, \dots, s_p),
$$
where $ p = \min(m, n) $. The $s_i's$ are called the singular values of $A$, and they are the square roots of the eigenvalues of $AA^*$
\end{theorem}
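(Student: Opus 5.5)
The plan is to derive the factorization from the spectral theorem for the Hermitian positive semidefinite matrix $A^*A \in \mathbb{C}^{n\times n}$. Since $x^*A^*Ax = \|Ax\|^2 \ge 0$, this matrix is unitarily diagonalizable with nonnegative real eigenvalues. I would choose a unitary $V$ with $V^*A^*AV = \mathrm{diag}(\lambda_1,\dots,\lambda_n)$, ordered so that $\lambda_1\ge \dots\ge \lambda_r>0=\lambda_{r+1}=\dots=\lambda_n$, where $r=\mathrm{rank}(A)\le p$. Then I set $s_i=\sqrt{\lambda_i}$ for $i\le p$.

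The next step builds the left singular vectors. Let $v_i$ be the columns of $V$. For $i\le r$, put $u_i := Av_i/s_i$. These are orthonormal, because
\[
u_i^*u_j=\frac{v_i^*A^*Av_j}{s_is_j}=\frac{\lambda_j\delta_{ij}}{s_is_j}=\delta_{ij}.
\]
For $i>r$, $\|Av_i\|^2=\lambda_i=0$, so $Av_i=0$. I then extend $u_1,\dots,u_r$ to an orthonormal basis $u_1,\dots,u_m$ of $\mathbb{C}^m$ by Gram--Schmidt, and let $U$ be the unitary matrix with these columns.

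To verify the factorization, it suffices to check $AV=U\Sigma$ column by column, with $\Sigma=\mathrm{diag}(s_1,\dots,s_p)\in\mathbb{R}^{m\times n}$ rectangular. For $i\le r$, $Av_i=s_iu_i$. For $r<i\le n$, $Av_i=0$, and column $i$ of $U\Sigma$ is either $s_iu_i=0$ (when $i\le p$) or zero (when $i>p$). Hence $A=U\Sigma V^*$.

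Finally, $AA^*=U\Sigma\Sigma^{T}U^*$ and $\Sigma\Sigma^T=\mathrm{diag}(s_1^2,\dots,s_p^2,0,\dots)\in\mathbb{R}^{m\times m}$. So the eigenvalues of $AA^*$ are the $s_i^2$, padded with zeros when $m>p$, and the $s_i$ are their square roots as claimed.

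The only delicate point is the case $r<p$ or $m>n$, where fewer vectors $u_i$ come directly from $A$. This is handled by the orthonormal completion together with the observation that the corresponding $s_i$ vanish, so the choice of completion does not affect the product. Everything else is bookkeeping with the spectral theorem.
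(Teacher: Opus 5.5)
Your proof is correct. The orthonormality computation for $u_i=Av_i/s_i$, the column-by-column check of $AV=U\Sigma$ (including the ranges $r<i\le p$ and $i>p$), and the identity $AA^*=U\Sigma\Sigma^{T}U^*$ all go through. You also read the last clause of the statement correctly: the $s_i^2$ are the eigenvalues of $AA^*$, together with $m-p$ additional zeros when $m>n$.

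The paper gives no proof of its own; it cites Golub--Van Loan, where the factorization is obtained by a different route. There one takes unit vectors $x,y$ with $Ax=s_1y$ and $s_1=\|A\|_2$, and completes them to unitary matrices so that $U^*AV=\begin{pmatrix} s_1 & w^*\\ 0 & B\end{pmatrix}$. If $w\neq 0$, this block matrix would have $2$-norm exceeding $s_1$, so $w=0$, and one inducts on $B$. That route needs only compactness of the unit sphere, not the spectral theorem, and it exhibits $s_1$ as the operator norm. Yours delegates the existence of orthonormal singular vectors to the spectral theorem for the positive semidefinite matrix $A^*A$. This shortens the construction and builds the eigenvalue interpretation of the $s_i$ in from the start. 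In both approaches the statement about $AA^*$ is then read off from $U\Sigma\Sigma^{T}U^*$ exactly as you do.

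A minor simplification: you never need that the number of positive $\lambda_i$ equals $\mathrm{rank}(A)$. Orthonormality of $u_1,\dots,u_r$ in $\mathbb{C}^m$ already forces $r\le m$, hence $r\le p$, which is all the completion step uses.
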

\begin{definition}
Let $S \subseteq \mathbb{R}^n$ be a real algebraic set and let $y \in \mathbb{R}^n$.
A smooth point $x \in S^{\mathrm{reg}}$ is an \emph{ED critical point} of $y$ on $S$ if
\[
x-y \perp T_x S,
\qquad\text{i.e.}\qquad (x-y)^{\top}a = 0 \ \ \text{for all } a \in T_xS.
\]
Analogously, let $M \subseteq \mathbb{C}^{n\times t}$ be a matrix variety and $Y \in \mathbb{C}^{n\times t}$.
A smooth point $X \in M^{\mathrm{reg}}$ is an \emph{HD critical point} of $Y$ on $M$ if
\[
\mathrm{Re}\,\mathrm{Tr}\!\big((Y-X)A^{*}\big)=0
\qquad\text{for all } A \in T_XM.
\]
\end{definition}

\begin{lemma}\label{a}
   If $A_1 = u_1 v_1^*$ and $A_2 = u_2 v_2^*$ are rank-one matrices, then the Hermitian inner product given by $\langle A_1, A_2 \rangle = \mathrm{Tr}(A_1 A_2^*)$ is also given by

$$
\langle A_1, A_2 \rangle = \langle u_1, u_2 \rangle \langle v_1, v_2 \rangle.
$$\end{lemma}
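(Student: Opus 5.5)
The plan is a direct computation using only the definition $\langle A,B\rangle=\mathrm{Tr}(AB^*)$, the identity $(xy^*)^*=yx^*$, and cyclicity of the trace. First I fix the vector convention compatible with the matrix one. Viewing a column vector as an $n\times 1$ matrix, the definition gives $\langle x,y\rangle=\mathrm{Tr}(xy^*)=y^*x$. This is linear in the first slot and conjugate-linear in the second.

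Next I expand the left-hand side:
\[
\langle A_1,A_2\rangle=\mathrm{Tr}\big(u_1v_1^*(u_2v_2^*)^*\big)=\mathrm{Tr}\big(u_1\,v_1^*v_2\,u_2^*\big).
\]
The middle factor $v_1^*v_2$ is a $1\times1$ matrix, i.e.\ a scalar, so it can be pulled out of the trace. Cyclicity then gives $\mathrm{Tr}(u_1u_2^*)=u_2^*u_1=\langle u_1,u_2\rangle$. Hence
\[
\langle A_1,A_2\rangle=(v_1^*v_2)\,\langle u_1,u_2\rangle .
\]

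The only delicate step is identifying the scalar $v_1^*v_2$, because this is where a conjugation can slip. With the convention above, $v_1^*v_2=\langle v_2,v_1\rangle=\overline{\langle v_1,v_2\rangle}$. So the formula in the lemma holds exactly when the second factor is read as $v_1^*v_2$. This reading is natural because $V$ enters as $V^*$, i.e.\ through the conjugate factor.

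In the proof I would state the convention explicitly, writing the result as $\langle A_1,A_2\rangle=\langle u_1,u_2\rangle\,\overline{\langle v_1,v_2\rangle}$ with $\overline{\langle v_1,v_2\rangle}=v_1^*v_2$. This form is fully unambiguous. I would also note that for the later applications nothing is lost. There one only needs real parts and orthogonality relations, $\mathrm{Re}\langle A_1,A_2\rangle$ with orthonormal columns of $U$ and $V$, and these are insensitive to which factor carries the conjugate. The rest is routine.
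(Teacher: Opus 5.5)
Your computation is correct and is essentially the paper's argument: the paper expands $\mathrm{Tr}(A_1A_2^*)$ entrywise as $\sum_i u_{1i}\bigl(\sum_k v_{1k}\overline{v_{2k}}\bigr)\overline{u_{2i}}$, while you pull out the scalar $v_1^*v_2$ and use cyclicity of the trace. The conjugation you flag is genuine. The paper gets $\langle v_1,v_2\rangle$ with no bar only because it writes the row factor of $A_1$ as $(v_{11},\dots,v_{1n})$, i.e.\ it tacitly takes $A_i=u_iv_i^{T}$ rather than $u_iv_i^{*}$. With $v^*$ read literally, your formula $\langle u_1,u_2\rangle\overline{\langle v_1,v_2\rangle}$ is the accurate one. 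As you say, the only later use (orthogonality of distinct columns of $U$ and $V$ in the Eckart--Young proof) is unaffected.
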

\begin{proof}
  $$  \mathrm{Tr}\left[
\begin{pmatrix}
u_{11} \\
\vdots \\
u_{m1}
\end{pmatrix}
\begin{pmatrix}
v_{11} & \cdots & v_{1n}
\end{pmatrix}
\cdot
\begin{pmatrix}
\overline{v_{21}} \\
\vdots \\
\overline{v_{2n}}
\end{pmatrix}
\begin{pmatrix}
\overline{u_{21}} & \cdots & \overline{u_{2m}}
\end{pmatrix}
\right]$$ 
So we have $\langle A_1, A_2 \rangle =\sum_{i} u_{1i} \left( \sum_{k} v_{1k} \overline{v_{2k}} \right) \overline{u_{2i}} = \sum_{i} u_{1i} \overline{u_{2i}} \sum_{k} v_{1k} \overline{v_{2k}} = \langle u_{1}, u_{2} \rangle \langle v_{1}, v_{2} \rangle
$
\end{proof}

 \begin{lemma}

Let  B $\in \mathbb{C}^{m \times n}$.  If  $\langle B, \mathbb{C}^m \otimes v \rangle = 0$,  then $\langle \text{Row}(B), v \rangle = 0.$ Similarly

If  $\langle B, u \otimes \mathbb{C}^n \rangle = 0$,  then  $\langle \text{Col}(B), u \rangle = 0$.

\end{lemma}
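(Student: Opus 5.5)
The plan is to reduce the statement to Lemma~\ref{a}, by testing $B$ against rank-one matrices and then expanding $B$ as a sum of rank-one pieces. First I fix the meaning of the notation. Under the identification $u\otimes v \leftrightarrow u v^*$ (the convention implicit in Lemma~\ref{a}), the subspace $\mathbb{C}^m\otimes v$ consists of all matrices $w v^*$ with $w\in\mathbb{C}^m$. The hypothesis therefore says
\[
\mathrm{Tr}\bigl(B (w v^*)^*\bigr)=\mathrm{Tr}(B v w^*) = w^* B v = 0 \qquad \text{for all } w\in\mathbb{C}^m .
\]

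\textbf{First statement.} Choosing $w=e_i$ for $i=1,\dots,m$ gives $e_i^* B v = 0$. If $b_i$ denotes the $i$-th row of $B$, this says $\sum_k (b_i)_k v_k = 0$. The condition "$\langle \mathrm{Row}(B), v\rangle=0$" should be read with respect to the same pairing that Lemma~\ref{a} produces. To see which pairing that is, I would write $B=\sum_i e_i b_i = \sum_i e_i \otimes \overline{b_i}^{\,T}$, so that each summand is a rank-one matrix $e_i \bar b_i^{*}$. By Lemma~\ref{a},
\[
\langle e_i \bar b_i^{*},\, w v^*\rangle = \langle e_i,w\rangle\,\langle \bar b_i, v\rangle .
\]
Summing over $i$ and letting $w$ range over the standard basis, the hypothesis becomes $\langle \bar b_i, v\rangle = 0$ for every $i$. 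Here $\bar b_i$ is precisely the vector $\bar b_i$ appearing in the tensor factor $\mathbb{C}^n$ of $B$. So the rows of $B$, viewed as elements of the second tensor factor, are Hermitian-orthogonal to $v$. By sesquilinearity, orthogonality to each row extends to orthogonality to all of $\mathrm{Row}(B)$.

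\textbf{Second statement.} The column case is symmetric. Write $B=\sum_k c_k e_k^*$, where $c_k$ are the columns of $B$. Lemma~\ref{a} gives
\[
\langle c_k e_k^*,\, u w^*\rangle = \langle c_k,u\rangle\,\langle e_k,w\rangle .
\]
Testing against $w=e_k$ then yields $\langle c_k,u\rangle=0$ for each $k$. Linearity extends this to all of $\mathrm{Col}(B)$.

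\textbf{Main obstacle.} The only delicate point is the conjugation convention. One must be clear whether "Row$(B)$" means the literal rows or their conjugates, i.e.\ the second tensor factors in $B=\sum u\otimes v$. I would state explicitly that rows are regarded as elements of the second tensor factor via $u\otimes v\mapsto uv^*$; then the conjugates cancel and the statement holds verbatim. Under the other reading the statement is still true, but with the bilinear pairing $\sum_k (b_i)_k v_k$ in place of the Hermitian one. Either way, all the content is in Lemma~\ref{a} plus linearity.
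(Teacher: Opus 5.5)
Your argument is correct and essentially the same as the paper's. Both test $B$ against $e_i\otimes v$ (resp.\ $u\otimes e_j$) and read off the trace as the pairing of the $i$-th row (resp.\ $j$-th column) with $v$ (resp.\ $u$), so the detour through Lemma~\ref{a} is unnecessary. The convention question you raise is settled in the paper by reading $e_i\otimes v$ as $e_i v^{T}$ (its adjoint is the matrix with $\bar v$ in the $i$-th column), under which $\mathrm{Tr}\bigl(B\,\bar v e_i^{T}\bigr)=\sum_k B_{ik}\bar v_k$ is literally the Hermitian product of the $i$-th row with $v$.
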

\begin{proof}
    Let ${e_1,\cdots ,e_m}$ be the canonical basis for $\mathbb{C}^m$ then $\langle B,e_i\otimes v\rangle=0$ $\forall i=1,\cdots,m$. Then we have $\mathrm{Tr}(B(\overline{v}\otimes e_i))=0=\mathrm{Tr}(B(0,\cdots ,\overline{v} ,\cdots0))=\langle B^i,v\rangle$ where $B^i$ is the i-th row of B. By a similar argument, if $\langle B, u \otimes \mathbb{C}^n \rangle = 0$, then $\langle \text{Col}(B), u \rangle = 0$.

\end{proof}
\begin{theorem}[\textbf{Eckart-Young for HD}]\label{gg}
Let $A \in \mathbb{C}^{m \times n}$ be a matrix of rank $r$, and let its singular value decomposition be
$$
A = U \Sigma V^*, \qquad 
\Sigma = \mathrm{diag}(s_1, \dots, s_r, 0, \dots, 0),
$$
with singular values $s_1 \geq \cdots \geq s_r > 0$.  

For $1 \leq i \leq r$, define 
$$
\Sigma_i = \mathrm{diag}(0, \dots, 0, s_i, 0, \dots, 0),
$$
so that
$$
A = U \Sigma_1 V^* + U \Sigma_2 V^* + \cdots + U \Sigma_r V^*.
$$

Then the critical points of the Hermitian distance  function from $A$ to the variety of matrices of rank at most $k$ are precisely the matrices of the form
$$
U \bigl(\Sigma_{i_1} + \cdots + \Sigma_{i_k}\bigr) V^*, 
\qquad 1 \leq i_1 < \cdots < i_k \leq r.
$$

In particular, if the nonzero singular values of $A$ are distinct, then for each $k \leq r$ there are exactly $\binom{r}{k}$ such critical points.
\end{theorem}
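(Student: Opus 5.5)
The plan is to characterise HD critical points on the determinantal variety $M_k=\{X:\operatorname{rank}X\le k\}$ through the tangent space at a smooth point, and then to use the two preliminary lemmas to force a simultaneous singular value decomposition of $X$ and $A-X$.

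First, fix a smooth point $X\in M_k$, so $\operatorname{rank}X=k$. Write its column space as $\mathrm{Col}(X)$ and its row space as $\mathrm{Row}(X)$. The tangent space is
\[
T_XM_k=\mathrm{Col}(X)\otimes\C^n+\C^m\otimes\mathrm{Row}(X).
\]
This follows by differentiating curves $(U_0+tU_1)(V_0+tV_1)^*$ through a factorisation $X=U_0V_0^*$. Since $T_XM_k$ is a complex subspace, the real condition $\mathrm{Re}\langle A-X,B\rangle=0$ for all $B\in T_XM_k$ upgrades to the complex condition $\langle A-X,B\rangle=0$: replace $B$ by $iB$ to kill the imaginary part. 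This is the one point where the Hermitian setting differs from the Euclidean one, and it is immediate.

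Second, apply the earlier lemma (orthogonality to $u\otimes\C^n$, resp. $\C^m\otimes v$) with $B=A-X$. Orthogonality to $\mathrm{Col}(X)\otimes\C^n$ gives $\mathrm{Col}(A-X)\perp\mathrm{Col}(X)$, i.e. $X^*(A-X)=0$. Orthogonality to $\C^m\otimes\mathrm{Row}(X)$ gives $\mathrm{Row}(A-X)\perp\mathrm{Row}(X)$, i.e. $(A-X)X^*=0$. Conversely, these two conditions imply $A-X\perp T_XM_k$, using Lemma~\ref{a} on rank-one spanning elements $u\otimes w$ and $w'\otimes v$. So the critical points are exactly the rank-$k$ matrices $X$ with $X^*(A-X)=0$ and $(A-X)X^*=0$.

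Third, set $E=A-X$, so $A=X+E$, and $X$ and $E$ have mutually orthogonal column spaces and mutually orthogonal row spaces. Take compact SVDs $X=\sum_{j\le k}\sigma_j x_j w_j^*$ and $E=\sum_l \tau_l x'_l w'^*_l$. The union of their left singular vectors is orthonormal, and likewise for the right singular vectors. Hence their concatenation is an SVD of $A$, and the singular triples of $X$ are $k$ of the singular triples of $A$. This gives $X=U(\Sigma_{i_1}+\dots+\Sigma_{i_k})V^*$. When the nonzero singular values are distinct, the singular triples (up to the unavoidable phase $u\mapsto e^{i\theta}u$, $v\mapsto e^{i\theta}v$, which leaves $u v^*$ unchanged) are uniquely determined, so the rank-one pieces $U\Sigma_iV^*$ are canonical and we get exactly $\binom{r}{k}$ points. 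Conversely, each such $X$ has rank $k$, hence is smooth, and satisfies the two orthogonality relations by construction.

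The main obstacle is the uniqueness step in the third paragraph: concatenating the SVDs of $X$ and $E$ must reproduce the \emph{given} decomposition $U\Sigma V^*$. For repeated singular values this fails, and $X$ is only of the stated form for \emph{some} SVD of $A$. I would therefore phrase the general statement as ranging over SVDs of $A$. In the distinct case I would argue via spectral projectors: $u_iu_i^*$ is the spectral projector of $AA^*$ for the eigenvalue $s_i^2$, and $u_i v_i^* = s_i^{-1}u_iu_i^*A$ is therefore intrinsic, independent of the chosen SVD.
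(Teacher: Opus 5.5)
Your proof is correct and follows essentially the same route as the paper: you describe $T_XM_k$ as $\mathrm{Col}(X)\otimes\mathbb{C}^n+\mathbb{C}^m\otimes\mathrm{Row}(X)$, deduce that $A-X$ and $X$ have mutually orthogonal column and row spaces, and concatenate their SVDs into an SVD of $A$. Your caveat about repeated singular values is justified, since the paper silently renames the newly built unitaries $U,V$ and so its argument likewise only gives the form for \emph{some} SVD of $A$ (e.g.\ for $A=I_2$, $k=1$, every $uu^*$ with $\|u\|=1$ is critical), and your spectral-projector identity $u_iv_i^*=s_i^{-1}u_iu_i^*A$ is exactly what pins the critical points to the given SVD in the distinct case.
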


Before proving this theorem, we will talk about how the secant varieties of rank one matrices are defined which gives a geometric interpretation for the rank of matrices and how their tangent spaces behave.
\begin{definition}
    Let \( X \subset \mathbb{P}V \) be an irreducible variety. The \( k \)-secant variety of \( X \) is defined by:

\[
\sigma_k(X) := \overline{
\bigcup_{\substack{p_1, \dots, p_k \in X}} 
\mathbb{P} \text{Span} \{ p_1, \dots, p_k \}
}
\]

where \( \mathbb{P} \text{Span} \{ p_1, \dots, p_k \} \) is the smallest projective linear space containing \( p_1, \dots, p_k \), and the overbar denotes the Zariski closure.
\end{definition}
If we consider $M_1$ to be the rank one matrices over a field $K$ or decomposable tensors in $K^m\otimes K^n$ then $\sigma_r(M_1)$ is just the variety of matrices with rank less than or equal to r. This is due to the fact any rank r matrix can be written as sum of rank one matrices. The tangent spaces to the secant varieties can be understood with the help of the following lemma due to Terracini.
\begin{lemma} [Terracini]
    Let \( z \in \mathbb{P} \text{Span} \{ p_1, \dots, p_k \} \) where  $ p_1, \dots, p_k $ be general points. Then:

\[
T_z \sigma_k(X) = \mathbb{P} \text{Span} \{ T_{p_1} X, \dots, T_{p_k} X \}
\]

\end{lemma}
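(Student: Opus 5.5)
The natural approach is the classical one via the affine cones and the addition map, together with generic smoothness in characteristic zero (we work over $\C$, or over $\R$ after complexifying). Let $\hat X\subset V$ denote the affine cone over $X$ and $\hat\sigma_k(X)$ the affine cone over $\sigma_k(X)$. I will consider the addition morphism
\[
\Phi:\hat X\times\cdots\times\hat X\longrightarrow V,\qquad (v_1,\dots,v_k)\mapsto v_1+\cdots+v_k .
\]
Since $\hat X$ is stable under scaling, a point $\lambda_1v_1+\cdots+\lambda_kv_k$ of $\mathrm{Span}\{p_1,\dots,p_k\}$ equals $\Phi(\lambda_1v_1,\dots,\lambda_kv_k)$. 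Hence the Zariski closure of the image of $\Phi$ is exactly $\hat\sigma_k(X)$, which is irreducible because the source is.

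The first step is to compute the differential of $\Phi$ at a point $(v_1,\dots,v_k)$ with each $v_i$ a smooth point of $\hat X$. Because $\Phi$ is the restriction of a linear map, $d\Phi$ sends $(w_1,\dots,w_k)\in \hat T_{v_1}\hat X\times\cdots\times\hat T_{v_k}\hat X$ to $w_1+\cdots+w_k$. So its image is $\hat T_{v_1}\hat X+\cdots+\hat T_{v_k}\hat X$. The affine tangent space to the cone at $v_i$ is constant along the line through $v_i$, so this sum is the cone over $\mathbb{P}\mathrm{Span}\{T_{p_1}X,\dots,T_{p_k}X\}$.

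The second step is to identify this image with the tangent space $\hat T_z\hat\sigma_k(X)$. One inclusion holds everywhere: $\Phi$ maps into $\hat\sigma_k(X)$, so $d\Phi$ maps into the Zariski tangent space at $z=\Phi(v_1,\dots,v_k)$. The reverse inclusion is the heart of the matter and is where I expect the main obstacle. I will apply generic smoothness (generic submersiveness) to the dominant morphism $\Phi$ from the smooth locus of the source onto $\hat\sigma_k(X)$, which requires characteristic zero. This gives a dense open set $U$ of the source with the following properties: $\Phi(U)$ lies in the smooth locus of $\hat\sigma_k(X)$, and $d\Phi$ is surjective onto $\hat T_z\hat\sigma_k(X)$ at every point of $U$.

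Finally I must translate ``general point of the source'' into the hypothesis ``$p_1,\dots,p_k$ general and $z\in\mathbb{P}\mathrm{Span}\{p_1,\dots,p_k\}$''. The condition defining $U$ depends only on $(v_1,\dots,v_k)$. The span of the $\hat T_{v_i}\hat X$ is unchanged by rescaling each $v_i$. So for general $p_i$ and a general choice of coefficients $\lambda_i$, the point $(\lambda_1v_1,\dots,\lambda_kv_k)$ lies in $U$. Projectivizing gives the claimed equality of tangent spaces. I would also note explicitly that for special $z$ on the span, for instance $z=p_1$, the statement can fail. ``General'' should therefore be read as applying to $z$ as well, which is the standard meaning and the one needed later when applying the lemma to $\sigma_k$ of rank-one matrices, where $\hat T_{u\otimes v}=\C^m\otimes v+u\otimes\C^n$.
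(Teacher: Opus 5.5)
The paper gives no proof of this lemma. It quotes it as the classical result of Terracini and uses it immediately to describe the tangent spaces of $M_r$, so there is no argument in the paper to compare against.

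Your proof is the standard one, and it is correct:
\begin{enumerate}
\item The addition map $\hat X^{k}\to V$ on affine cones has differential with image $\sum_i \hat T_{v_i}\hat X$.
\item This image always lies in $\hat T_z\hat\sigma_k(X)$.
\item Generic smoothness in characteristic $0$ gives surjectivity onto $\hat T_z\hat\sigma_k(X)$ on a dense open subset of the source.
\item The rescaling argument turns ``general point of $\hat X^k$'' into ``general $p_i$ and general $z$ on their span''.
\end{enumerate}

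Your observation that $z$ itself must be general on $\mathbb{P}\mathrm{Span}\{p_1,\dots,p_k\}$ is a genuine sharpening of the paper's phrasing, since the equality can fail at, e.g., $z=p_1$.

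Note, however, that the paper then applies the formula at specific points, namely $U(\Sigma_{i_1}+\cdots+\Sigma_{i_k})V^*$ and an arbitrary rank-$k$ critical point $B$. A general-point statement does not literally cover these. For the determinantal variety this is harmless. The rank-exactly-$k$ matrices form a single $GL_m\times GL_n$-orbit. For $z=\sum_i u_i v_i^*$ of rank $k$, the space $\sum_i(\C^m\otimes v_i^*+u_i\otimes\C^n)=\{A: A(\ker z)\subseteq \mathrm{Im}\,z\}$ depends only on $z$ and transforms equivariantly. So your general-point equality propagates to every point of rank exactly $k$, which is what the paper actually needs.
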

This lemma helps us understand the tangent spaces of $M_r$ (the variety of matrices with rank at most r) with the help of the singular value decomposition. Consider a curve $\gamma(t)=v(t)\otimes u(t)$ with $\gamma(0)=v \otimes u$. The derivative at t=0 is given by $u'(0)\otimes v$ + $u \otimes v'(0)$ where $u'(0)$ and $v'(0)$ are arbitrary vectors so the tangent space is given by $\mathbb{C}^m\otimes u+v\otimes \mathbb{C}^n$ at $u\otimes v$. Now the tangent space to \( M_r \) at a point \( U(\Sigma_1 + \cdots + \Sigma_r)V^* \)=$\sum_{i=1}^r s_i \, (u_i \otimes v_i^*).$ (where $u_i,v_i$ are column vectors of U and V) can be described, by the Terracini Lemma, as

$
T_{U(\Sigma_1 + \cdots + \Sigma_r)V^*}M_r = T_{U\Sigma_1V^*}M_1 + \cdots + T_{U\Sigma_rV^*}M_1 = T_{s_1u_1 \otimes v^*_1}M_1 + \cdots + T_{s_ru_r \otimes v^*_r}M_1 = (\mathbb{C}^m \otimes v^*_1 + u_1 \otimes \mathbb{C}^n) + \cdots + (\mathbb{C}^m \otimes v^*_r + u_r \otimes \mathbb{C}^n)$. Now we can show the proof of theorem \ref{gg}.
\begin{proof}[Proof of Theorem~\ref{gg}]

 The matrix \( U(\Sigma_{i_1} + \cdots + \Sigma_{i_k}) V^* \) is a critical point of the distance function from \( A \) to the variety \( M_k \) if and only if the vector 

\[
A - U(\Sigma_{i_1} + \cdots + \Sigma_{i_k}) V^*
\]

is orthogonal to the tangent space.  \[
T_{U(\Sigma_{i_1} + \cdots + \Sigma_{i_k}) V^*} M_k = (\mathbb{C}^m \otimes{ v^*_{i_1}}+ u_{i_1} \otimes \mathbb{C}^n) + \cdots + (\mathbb{C}^m \otimes {v^*_{i_k}} + u_{i_k} \otimes \mathbb{C}^n).
\]

From the SVD of \( A \), we have 

\[
A - (U(\Sigma_{i_1} + \cdots + \Sigma_{i_k}) V^*) = U(\Sigma_{j_1} + \cdots + \Sigma_{j_l}) V^* = s_{j_1} u_{j_1} \otimes v^*_{j_1} + \cdots + s_{j_l} u_{j_l} \otimes v^*_{j_l}
\]

where \( \{j_1, \ldots, j_l\} \) is the set of indices given by the difference \( \{1, \ldots, r\} \setminus \{i_1, \ldots, i_k\} \). Let $e_1,\dots,e_m$ be the canonical basis of $\mathbb{C}^m$, then by Lemma \ref{a} We have \[
\langle s_{j_h} u_{j_h} \otimes v^*_{j_h}, e_l \otimes v^*_{i_s} \rangle = s_{j_h} \langle u_{j_h}, e_l \rangle \langle v_{j_h}, v_{i_s} \rangle = 0
\]
since \( v_{j_h} \) and \( v_{i_s} \) are distinct columns of the unitary matrix \( V \). Thus, the matrices \( U\Sigma_{j_h} V^* \) are orthogonal to the spaces \( \mathbb{C}^m \otimes V^*_{i_s} \).

In a similar way, since \( U \) is a unitary matrix, the matrices \( U\Sigma_{j_h} V^* \) are orthogonal to the spaces \( u_{i_s} \otimes \mathbb{C}^n \). Therefore, \( A - (U(\Sigma_{i_1} + \cdots + \Sigma_{i_k}) V^*) \) is orthogonal to the tangent space, and \( U(\Sigma_{i_1} + \cdots + \Sigma_{i_k}) V^* \) is a critical point. Now we will show all critical points are of this form, Let $B\in M_k$ be such that $A-B$ is orthogonal to the tangent space $T_B(M_k)$. We consider the SVD of B and A-B,let \( B = U'(\Sigma_1' + \cdots + \Sigma_k')V'^* \), and $
A - B = U''(\Sigma_1'' + \cdots + \Sigma_l'')V''^*
$
with \( \Sigma'_k \neq 0 \) and \( \Sigma''_l \neq 0 \). Since A-B is orthogonal to $T_B(M_k)$ we get that $\langle Col(A-B),u_s'\rangle=0=\langle Row(A-B),v_s'\rangle$ for $s= 1 ,\dots, k$. In particular, \(\text{Col}(A - B)\) is a vector subspace of \(\text{Span}\{u'_1, \ldots, u'_k\}^\perp\) and has dimension at most \(m - k\), while \(\text{Row}(A - B)\) is a vector subspace of \(\text{Span}\{\overline{v}'_1, \ldots, \overline{v}'_k\}^\perp\) and has dimension at most \(n - k\), so that 

$
l \leq \min\{m, n\} - k.
$
From the equality 

\[
A - B = (u''_1, \dots, u''_l, 0, \dots, 0)(\Sigma''_1 + \cdots + \Sigma''_l)V''^*
\]

we get 

\[
\text{Col}(A - B) \subset \text{Span}\{u''_1, \dots, u''_l\}
\]

and equality holds because the rank of A-B is l. In a similar way, 

\[
\text{Row}(A - B) = \text{Span}\{\overline{v}''_1, \dots, \overline{v}''_l\}.
\]

This implies that the orthonormal columns \( u''_1, \dots, u''_l, u'_1, \dots, u'_k \) can be completed with orthonormal \( m - l - k \) columns of \( \mathbb{C}^m \) to obtain a unitary \( m \times m \) matrix \( U \), while the orthonormal columns \( v''_1, \dots, v''_l, v'_1, \dots, v'_k \) can be completed with orthonormal \( n - l - k \) columns of \( \mathbb{C}^n \) to obtain a unitary \( n \times n \) matrix \( V \). We get 

\[
A - B = U
\begin{pmatrix}
\Sigma'' & 0 & 0 \\
0 & 0 & 0 \\
0 & 0 & 0
\end{pmatrix}
V^*,
\]

\[
B = U
\begin{pmatrix}
0 & 0 & 0 \\
0 & \Sigma' & 0 \\
0 & 0 & 0
\end{pmatrix}
V^*,
\]

where 

\[
\Sigma'' = \mathrm{diag}(s''_1, \dots, s''_l), \quad \Sigma' = \mathrm{diag}(s'_1, \dots, s'_k).
\]
 Thus $B$ is obtained by keeping exactly $k$ of the rank one summands $U\Sigma_iV^*$ in the SVD expansion of $A$, hence it has the required form.

\end{proof}
\begin{lemma}\label{Complex}
   Assume we have a complex subspace $W$ of a vector space V and a Hermitian inner product. If $v \in V$ and $\mathrm{Re} <v,w>=0$ $\forall w \in W$ then $<v,w>=0$   $\forall w \in W$
\end{lemma}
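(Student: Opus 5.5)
The idea is to exploit that $W$ is closed under multiplication by $i$. This lets the vanishing of the real part on $W$ force the vanishing of the imaginary part as well. We take the Hermitian inner product to be linear in the first argument and conjugate-linear in the second, as with $\langle A,B\rangle=\mathrm{Tr}(AB^*)$ in the paper. The argument is symmetric, so the other convention works the same way.

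Fix $w\in W$ and write $\langle v,w\rangle = a+ib$ with $a,b\in\R$. By hypothesis $a=\mathrm{Re}\langle v,w\rangle=0$. Because $W$ is a complex subspace, $iw\in W$ as well, and so $\mathrm{Re}\langle v,iw\rangle=0$. Conjugate-linearity in the second slot gives
\[
\langle v,iw\rangle=\bar{i}\,\langle v,w\rangle=-i(a+ib)=b-ia,
\]
hence $\mathrm{Re}\langle v,iw\rangle=b$. So $b=0$, and therefore $\langle v,w\rangle=a+ib=0$. Since $w\in W$ was arbitrary, the claim follows.

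There is no real obstacle. The only point needing care is the convention for which slot is conjugate-linear: it changes the sign in front of $b$ but not the conclusion, since in either case the real part of $\langle v,iw\rangle$ equals $\pm\mathrm{Im}\langle v,w\rangle$. Equivalently, the hypothesis says $v$ is $q$-orthogonal to the real subspace underlying $W$. Since that subspace is $i$-stable, orthogonality for the real form $q=\mathrm{Re}\langle\cdot,\cdot\rangle$ coincides with Hermitian orthogonality. This is the observation that will let us pass between HD critical points in $V^\R$ and the complex-linear orthogonality conditions used in the proof of Theorem~\ref{gg}.
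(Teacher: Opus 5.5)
Your proof is correct and is the same argument as the paper's: since $iw\in W$, the vanishing of $\mathrm{Re}\langle v,iw\rangle=\pm\mathrm{Im}\langle v,w\rangle$ forces $\langle v,w\rangle=0$. Your sign ($+\mathrm{Im}$) is the correct one for the paper's convention $\langle A,B\rangle=\mathrm{Tr}(AB^*)$, whereas the paper's $-\mathrm{Im}$ matches the other convention; as you note, this does not affect the conclusion.
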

\begin{proof}
    Since W is closed under multiplication by i, Consider $\mathrm{Re} <v,iw>=-Im<v,w>=0$. Hence $<v,w>=0$
\end{proof}
\begin{remark}
If $T_XM$ is a complex linear subspace of $V$, then the condition
$\mathrm{Re}\langle Y-X, A\rangle = 0$ for all $A\in T_XM$ implies
$\langle Y-X, A\rangle = 0$ for all $A\in T_XM$ (Lemma~\ref{Complex}).
If $T_XM$ is only real linear, this implication need not hold (see Remark \ref{unit}).
\end{remark}

\section{Matrices invariant under unitary actions and critical points }

Let $M\subset \mathbb{C}^{n\times t}$ be an algebraic variety of matrices invariant under the left and right actions of unitary matrices, i.e.
\[
U M V^* = M \qquad \text{for all } U\in U(n),\; V\in U(t),
\]
and, without loss of generality, assume $n\le t$.

\begin{definition}
Let $\Pi^\pm_n$ be the group of signed permutations, with cardinality $2^n n!$.
A set \( S \subseteq \mathbb{R}^n \) is \emph{absolutely symmetric} if
\[
S = \pi S \quad \text{for all } \pi \in \Pi^\pm_n.
\]
For any set \( S \subseteq \mathbb{R}^n \), its \emph{absolute symmetrization} is
\[
\Pi^\pm_n S := \{ \pi x : \pi \in \Pi^\pm_n,\ x \in S \}.
\]
If $M$ is a Unitary-invariant matrix variety, define
\[
\sigma(M):=\{x\in \mathbb{R}^n:\mathrm{diag}(x)\in M\}.
\]
If $S$ is absolutely symmetric, define
\[
\sigma^{-1}(S):=\{U\,\mathrm{diag}(x)\,V^*: x\in S,\ U\in U(n),\ V\in U(t)\}.
\]
\end{definition}

\begin{definition}[Real ED degree]
Let $S \subset \mathbb{R}^n$ be a real algebraic set.
For a generic point $y \in \mathbb{R}^n$, consider the squared distance
function $d_y(x) = \|x - y\|^2$ restricted to $S$.
The number of real critical points of $d_y$
is finite and constant on each  chamber of $y$(i.e.\ on each connected component of the complement of the ED-discriminant, see \cite{draisma2016euclidean}).
We denote this chamber-wise constant in the chamber of y  by 
$\mathrm{\mathbb{R}EDdeg}(S;y)$ and the set of all such possible values by
$\mathrm{\mathbb{R}EDdeg}(S)\subset\N$
.

\end{definition}

\begin{theorem}
A set $M \subset \mathbb{C}^{n\times t}$ is unitary invariant if and only if there exists
an absolutely symmetric set $S\subset \mathbb{R}^n$ such that
\[
S=\sigma(M)
\qquad\text{and}\qquad
M=\sigma^{-1}(S).
\]
\end{theorem}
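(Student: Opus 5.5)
The plan is to prove the two directions separately, using the singular value decomposition (the Golub--Van Loan theorem above) as the only real tool.

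\emph{The easy direction.} Suppose $S\subset\R^n$ is absolutely symmetric and $M=\sigma^{-1}(S)$. For unitary $U_0\in U(n)$ and $V_0\in U(t)$, and any element $U\,\mathrm{diag}(x)\,V^*$ of $M$, we have
\[
U_0\bigl(U\,\mathrm{diag}(x)\,V^*\bigr)V_0^*=(U_0U)\,\mathrm{diag}(x)\,(V_0V)^*\in\sigma^{-1}(S).
\]
So $U_0MV_0^*\subseteq M$. Applying this to $U_0^{-1}$ and $V_0^{-1}$ gives equality, hence $M$ is unitary invariant. Absolute symmetry of $S$ is not needed here. It is needed only to ensure $S=\sigma(M)$, which is part of the conclusion; this equality follows from the key observation below.

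\emph{The key observation.} For every signed permutation $\pi\in\Pi^\pm_n$ and every $x\in\R^n$, there are unitaries $P\in U(n)$ and $Q\in U(t)$ with
\[
P\,\mathrm{diag}(x)\,Q^*=\mathrm{diag}(\pi x).
\]
To construct them, write $\pi$ as a permutation composed with sign changes.
\begin{itemize}
\item For a permutation $\tau$, take $P$ to be the $n\times n$ permutation matrix of $\tau$. Take $Q$ to be the $t\times t$ block matrix that acts by the same permutation on the first $n$ coordinates and by the identity on the last $t-n$ coordinates. This step uses $n\le t$.
\item For sign changes, take $P$ to be a diagonal matrix with entries $\pm1$ and $Q=I$. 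Both are real orthogonal, hence unitary.
\end{itemize}
Checking that these act as claimed on the rectangular diagonal $\mathrm{diag}(x)$ is a routine bookkeeping computation.

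\emph{The direction ``unitary invariant $\Rightarrow$ such an $S$ exists''.} Suppose $M$ is unitary invariant and set $S:=\sigma(M)$.
\begin{itemize}
\item \emph{$S$ is absolutely symmetric.} If $x\in S$, then $\mathrm{diag}(x)\in M$. By the key observation and invariance, $\mathrm{diag}(\pi x)=P\,\mathrm{diag}(x)\,Q^*\in M$, so $\pi x\in S$.
\item \emph{$M\subseteq\sigma^{-1}(S)$.} Given $X\in M$, take its SVD $X=U\Sigma V^*$ with $\Sigma=\mathrm{diag}(s)$ and $s\in\R^n$; again $n=\min(n,t)$. Invariance gives $\Sigma=U^*XV\in M$, so $s\in S$ and therefore $X\in\sigma^{-1}(S)$.
\item \emph{$\sigma^{-1}(S)\subseteq M$.} If $x\in S$, then $\mathrm{diag}(x)\in M$. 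By invariance, $U\,\mathrm{diag}(x)\,V^*\in M$ for all unitaries $U,V$.
\end{itemize}
This proves $M=\sigma^{-1}(S)$.

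\emph{Completing the easy direction.} Return to the setting $M=\sigma^{-1}(S)$ with $S$ absolutely symmetric. The inclusion $S\subseteq\sigma(M)$ is immediate: take $U=I$ and $V=I$. For the reverse, suppose $\mathrm{diag}(y)\in M$, so $\mathrm{diag}(y)=U\,\mathrm{diag}(x)\,V^*$ for some $x\in S$. Then $\mathrm{diag}(y)$ and $\mathrm{diag}(x)$ have the same singular values, because $\mathrm{diag}(y)\mathrm{diag}(y)^*$ and $\mathrm{diag}(x)\mathrm{diag}(x)^*$ are unitarily similar. The singular values of $\mathrm{diag}(y)$ are $|y_1|,\dots,|y_n|$ up to order, and those of $\mathrm{diag}(x)$ are $|x_1|,\dots,|x_n|$ up to order. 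Hence $y=\pi x$ for some signed permutation $\pi$, and absolute symmetry gives $y\in S$.

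\emph{Main obstacle.} The only delicate point is this last step: identifying $\sigma(\sigma^{-1}(S))$ with $\Pi^\pm_n S$. It requires knowing that the multiset $\{|x_i|\}$ is a unitary invariant of $\mathrm{diag}(x)$, and here the rectangular shape $n\le t$ must be handled carefully. I would settle it by comparing the eigenvalues of $\mathrm{diag}(x)\mathrm{diag}(x)^*=\mathrm{diag}(|x_1|^2,\dots,|x_n|^2)$, as above.
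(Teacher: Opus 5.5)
Your proof is correct and follows the same route as the paper: the SVD moves an element of $M$ to diagonal form, signed permutations are realized by unitary signed permutation matrices, and invariance follows from closure of $U(n)$ and $U(t)$ under products. It is in fact more complete than the paper's, since you check both inclusions in $M=\sigma^{-1}(S)$ and absolute symmetry for every $x\in S$, and your final verification that $\sigma(\sigma^{-1}(S))=S$ is correct but not needed, because in the ``if'' direction $S=\sigma(M)$ is a hypothesis rather than part of the conclusion.
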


\begin{proof}
Assume $M$ is unitary invariant. Let $A\in M$ and write an SVD
\[
A=U\Sigma V^*,
\qquad
\Sigma=\mathrm{diag}(\sigma_1,\ldots,\sigma_n),
\]
(with $n\le t$ so $\Sigma\in \mathbb{C}^{n\times t}$ in the usual rectangular-diagonal sense).
Since $M$ is invariant, $\Sigma=U^*AV\in M$, hence $(\sigma_1,\ldots,\sigma_n)\in \sigma(M)=S$.
Moreover, for any signed permutation $\pi\in \Pi_n^\pm$, we can realize $\pi$ by multiplying
$\Sigma$ on the left and right by suitable unitary signed permutation matrices; therefore
$\mathrm{diag}(\pi\sigma)\in M$ and hence $\pi\sigma\in S$. Thus $S$ is absolutely symmetric.

Conversely, assume $S$ is absolutely symmetric and set $M:=\sigma^{-1}(S)$.
If $A\in M$, then $A=U\mathrm{diag}(x)V^*$ for some $x\in S$ and unitary $U,V$.
For any $U'\in U(n)$ and $V'\in U(t)$ we have
\[
U'AV'^*=(U'U)\mathrm{diag}(x)(V'V)^*\in \sigma^{-1}(S)=M,
\]
since products of unitary matrices are unitary. Hence $M$ is unitary invariant.
\end{proof}

    \begin{example}[Unitary Group]
Consider first the set 
\[
M = U(n) \subset \mathbb{C}^{n\times n}.
\]
It is clearly invariant under left and right multiplication by unitary matrices.  
Since every unitary matrix has all singular values equal to \(1\), the corresponding absolutely
symmetric set is
\[
S = \{ x \in \mathbb{R}^n : x_i = 1 \text{ for all } i \}.
\]
For any \(A \in U(n)\), an SVD has the form \(A = U \Sigma V^*\) with \(\Sigma = I_n\).
Because \(\Sigma \in \sigma^{-1}(S)\) and products of unitary matrices are Unitary, we obtain
\(\sigma^{-1}(S) = U(n)\), as expected.
\end{example}

\begin{example}[Rank Variety]
Consider the determinantal variety
\[
M_r = \{ A \in \mathbb{C}^{n\times t} : \mathrm{rank}(A) \le r \}.
\]
This set is also invariant under left and right unitary actions.
A matrix has rank at most \(r\) precisely when its singular-value vector has at most \(r\) nonzero
entries, so the corresponding absolutely symmetric set is
\[
S = \{ x \in \mathbb{R}^n : \#\{i : x_i \neq 0\} \le r \}.
\]
Hence \(\sigma^{-1}(S)\) consists exactly of the matrices whose singular values lie in this set,
and therefore \(\sigma^{-1}(S) = M_r\).
\end{example}

       \begin{lemma}\label{tsp}

           The tangent space at a point $M$ $\in U(n)$ is given by $T_{M}(U(n))=\lbrace ZM$ : $Z=-Z^* \rbrace=\{MZ: Z=-Z^*\}$
           
       \end{lemma}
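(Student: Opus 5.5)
We will prove Lemma~\ref{tsp} by combining a dimension count with an explicit description of curves through $M$, viewing $U(n)=\{A\in\C^{n\times n}: A^*A=I\}$ as a real algebraic submanifold of $V^{\R}\cong\R^{2n^2}$.

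First we show the inclusion $T_M(U(n))\subseteq\{ZM: Z=-Z^*\}$. Take a smooth curve $\gamma(s)$ in $U(n)$ with $\gamma(0)=M$ and $\gamma'(0)=A$. Differentiating $\gamma(s)\gamma(s)^*=I$ at $s=0$ gives
\[
AM^*+MA^*=0 .
\]
Hence $Z:=AM^*$ satisfies $Z^*=MA^*=-Z$. Since $M^*M=I$, we have $A=ZM$, so $A$ lies in the right-hand side.

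Next we show the reverse inclusion. Given $Z$ with $Z^*=-Z$, set $\gamma(s)=\exp(sZ)M$. Then
\[
\exp(sZ)^*=\exp(sZ^*)=\exp(-sZ),
\]
so $\exp(sZ)$ is unitary and $\gamma(s)\in U(n)$. Since $\gamma'(0)=ZM$, every $ZM$ is a tangent vector. Alternatively, we can argue by dimension: $U(n)$ is the fiber of the map $A\mapsto AA^*$ into Hermitian matrices, whose differential $A\mapsto AM^*+MA^*$ is surjective at $M$. Indeed, for a Hermitian $H$ take $A=\tfrac12 HM$. So $U(n)$ is smooth of real dimension $n^2$, and the space of skew-Hermitian matrices also has real dimension $n^2$.

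Finally, for the second description, write $ZM=M(M^*ZM)$ and set $Z'=M^*ZM$. Then $Z'^*=M^*Z^*M=-Z'$. The map $Z\mapsto M^*ZM$ is a bijection on skew-Hermitian matrices, with inverse $Z'\mapsto MZ'M^*$, so $\{ZM\}=\{MZ'\}$.

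There is no real obstacle here. The only point to be careful about is that the tangent space is only a \emph{real} linear subspace: $iZ$ is Hermitian, not skew-Hermitian. This is exactly the subtlety flagged in the Remark after Lemma~\ref{Complex}, so we will treat $T_M(U(n))$ as a real subspace throughout.
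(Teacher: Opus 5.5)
Your proof is correct and follows the same route as the paper: linearize $AA^*=I$ at $M$ and substitute $Z=AM^*$; the paper's choice $Z=MB^*$ differs only by a sign. You also supply two points the paper leaves implicit, namely the reverse inclusion via the curve $\exp(sZ)M$ and the surjectivity of $A\mapsto AM^*+MA^*$ that justifies identifying the tangent space with this kernel, and you get the second description by the conjugation $Z\mapsto M^*ZM$ rather than by rerunning the argument with $A\mapsto A^*A$.
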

   \begin{proof}
       Consider F: $\mathbb{C}^{n \times n} \to \mathbb{C}^{n\times n}$ given by $M \to MM^*$. Consider $B\in \mathbb{C}^{n\times n}$ and $t \in \mathbb{R}$. $$(M+tB)(M+tB)^*=MM^* +t(BM^* +MB^*) +t^2BB^*$$.
       Therefore $T_{M}(U(n))$ is given by the kernel of the map $B\to MB^*+BM^*$. Consider Z=$MB^*$, then $Z=-Z^*$. If we consider the map $M\to M^*M$ we get the second representation of the tangent space as well.
       
   \end{proof}

   \begin{lemma} \label{RD}
       Consider a matrix A $\in \mathbb{C}^{n \times t}$ and let D=$\mathrm{diag}(d)\in\C^{n\times t}$ with $d_i\neq 0$ and $|d_i|^2$ pairwise distinct. If $AD^*$ and $D^*A$ are Hermitian then A is a diagonal matrix. In particular if D is a real diagonal matrix then A is a real diagonal matrix
   \end{lemma}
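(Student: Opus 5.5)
We prove Lemma~\ref{RD} by writing out the two Hermitian conditions entrywise and combining them. Write $A=(a_{ij})$ with $1\le i\le n$, $1\le j\le t$. Since $D=\mathrm{diag}(d)\in\C^{n\times t}$ is rectangular diagonal, $D^*\in\C^{t\times n}$ has $(D^*)_{ii}=\overline{d_i}$ for $i\le n$ and zeros elsewhere. Thus $AD^*\in\C^{n\times n}$ has entries $(AD^*)_{ij}=a_{ij}\overline{d_j}$ for $1\le i,j\le n$. Likewise $D^*A\in\C^{t\times t}$ has entries $(D^*A)_{ij}=\overline{d_i}a_{ij}$ for $i\le n$, and its rows with index $i>n$ vanish.

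\emph{Step 1 (off-square block).} For $j>n$, Hermitianity of $D^*A$ gives $\overline{d_i}a_{ij}=\overline{(D^*A)_{ji}}=0$ for every $i\le n$. Since $d_i\neq0$, this forces $a_{ij}=0$ for all $j>n$. Hence only the left $n\times n$ block $A_0$ of $A$ can be nonzero.

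\emph{Step 2 (square block).} For $i,j\le n$, the two Hermitian conditions read
\[
a_{ij}\overline{d_j}=\overline{a_{ji}}\,d_i,\qquad \overline{d_i}\,a_{ij}=\overline{\overline{d_j}a_{ji}}=d_j\overline{a_{ji}}.
\]
We eliminate $\overline{a_{ji}}$ between them. Multiplying the first by $\overline{d_i}$ gives $|d_i|^2\overline{a_{ji}}=a_{ij}\overline{d_j}\,\overline{d_i}$. Multiplying the second by $\overline{d_j}$ gives $a_{ij}\overline{d_i}\,\overline{d_j}=|d_j|^2\overline{a_{ji}}$. Comparing the two yields $(|d_i|^2-|d_j|^2)\overline{a_{ji}}=0$. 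For $i\neq j$, the moduli $|d_i|^2$ are pairwise distinct by hypothesis, so $a_{ji}=0$. By symmetry $a_{ij}=0$ as well. Therefore $A$ is rectangular diagonal, $A=\mathrm{diag}(a_{11},\dots,a_{nn})$.

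\emph{Step 3 (real case).} If $D$ is real, the diagonal entry $(AD^*)_{ii}=a_{ii}d_i$ of a Hermitian matrix must be real. Since $d_i\in\R\setminus\{0\}$, this gives $a_{ii}\in\R$, so $A$ is a real diagonal matrix.

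The only delicate point is the elimination in Step 2. Neither condition alone suffices: each one merely relates $a_{ij}$ to $\overline{a_{ji}}$. It is the combination of the two, together with the distinctness of the moduli $|d_i|^2$ rather than of the $d_i$ themselves, that forces the off-diagonal entries to vanish.
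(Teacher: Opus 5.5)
Your proof is correct and follows essentially the same route as the paper: you write both Hermitian conditions entrywise, use $d_i\neq 0$ to eliminate the columns $j>n$, and combine the two conditions to get $(|d_i|^2-|d_j|^2)\,a_{ji}=0$; in the real case you read off $a_{ii}\in\mathbb{R}$ from the diagonal of $AD^*$. The only difference is that the paper multiplies the two identities by $d_i,d_j$ rather than by $\overline{d_i},\overline{d_j}$, which is just the complex conjugate of your elimination step.
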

\begin{proof}
    $AD^*$ being Hermitian gives us that $\overline{A_{ij}}d_j=A_{ji}\overline{d_i}$ and $D^*A$ being Hermitian gives us that $\overline{A_{ij}}d_i=A_{ji}\overline{d_j}$ for i,j =1 to n and $A_{ij}\overline{d}_i=0$ for i=1 to n and j$>$n. This implication in particular gives if entries of D is real so is the entries of A. Therefore we have $$\overline{A_{ij}}d_id_j=A_{ji}d_i \overline{d_i}$$ $$\overline{A_{ij}}d_id_j=A_{ji}d_j \overline{d_j}$$ Comparing these identities gives
$A_{ij}(|d_i|^2-|d_j|^2)=0$ for all $i,j$. But since $d_i \overline{d_i}$ are distinct for each i we have that $A_{ij}=0$ for i$\neq$j and since $d_i \neq 0$ we have $A_{ij}=0$ for i = 1 to n and j$>$n. Thus we have that A is diagonal.
\end{proof}
\begin{theorem}\label{sim}
    Fix an $U(n) \times U(t)$-invariant matrix variety $M \subseteq \mathbb{C}^{n \times t}$. Consider a matrix $Y \in \mathbb{C}^{n \times t}$ such that the eigenvalues of $Y Y^*$ are nonzero and distinct. Let Y=$UDV^*$ be an SVD of Y with U,V unitary ,then any HD critical point $X$ of $Y$ with respect to $M$ admits a decomposition X$=UAV^*$ where A is a real diagonal matrix.
\end{theorem}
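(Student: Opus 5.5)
The plan is to use the unitary invariance of $M$ to produce tangent directions at $X$ along the group orbit, and then to apply Lemma~\ref{RD}. By invariance we may translate everything by $U,V$. Set $A:=U^*XV$. Since $M$ is invariant, $A\in M$. Moreover, the map $Z\mapsto U^*ZV$ is a real isometry for $q=\mathrm{Re}\langle\cdot,\cdot\rangle$, and it preserves $M$ and its regular locus. Hence $A$ is an HD critical point of $D=U^*YV=\mathrm{diag}(d)$ on $M$. Here $d_i$ are the singular values of $Y$, so they are real and nonzero, and the $d_i^2$ (the eigenvalues of $YY^*$) are pairwise distinct. So it suffices to show that every HD critical point $A$ of a real diagonal $D$ with these properties is itself real diagonal.

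Next, use the orbit directions. For any skew-Hermitian $Z_1\in\C^{n\times n}$ and $Z_2\in\C^{t\times t}$, consider the curves $s\mapsto e^{sZ_1}A$ and $s\mapsto Ae^{sZ_2}$. Both lie in $M$, since $e^{sZ_i}$ is unitary (compare Lemma~\ref{tsp}). Because $A\in M^{\mathrm{reg}}$, their velocities $Z_1A$ and $AZ_2$ lie in $T_AM$. The criticality condition therefore gives
\[
\mathrm{Re}\,\mathrm{Tr}\big((D-A)(Z_1A)^*\big)=0,\qquad \mathrm{Re}\,\mathrm{Tr}\big((D-A)(AZ_2)^*\big)=0
\]
for all skew-Hermitian $Z_1,Z_2$. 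The term $\mathrm{Re}\,\mathrm{Tr}(AA^*Z_1^*)$ vanishes, because $AA^*$ is Hermitian and $Z_1$ is skew-Hermitian: the trace of such a product is purely imaginary. Likewise $\mathrm{Re}\,\mathrm{Tr}(A^*AZ_2^*)=0$. What remains is
\[
\mathrm{Re}\,\mathrm{Tr}(DA^*Z_1^*)=0,\qquad \mathrm{Re}\,\mathrm{Tr}(A^*D\,Z_2^*)=0 .
\]
Now, $\mathrm{Re}\,\mathrm{Tr}(HZ^*)=0$ for all skew-Hermitian $Z$ holds exactly when $H$ is Hermitian. This is because $\mathrm{Re}\,\mathrm{Tr}(HZ^*)$ is the real inner product of $H$ with $Z$, and the skew-Hermitian matrices are the orthogonal complement of the Hermitian ones. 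We conclude that $DA^*$ and $A^*D$ are Hermitian. Taking adjoints, $AD^*$ and $D^*A$ are Hermitian, using that $D$ is real.

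Finally, apply Lemma~\ref{RD} with this $D$: its entries $d_i$ are nonzero and real, and the $|d_i|^2$ are distinct. The lemma gives that $A$ is a real diagonal matrix, and then $X=UAV^*$, as required. The main obstacle is the middle step: justifying that the orbit velocities lie in $T_AM$ at a regular point, which is immediate once the curves lie in $M$. Care is also needed with the rectangular case $n<t$, where $Z_2$ is $t\times t$. I expect Lemma~\ref{RD}, which already handles the entries with $j>n$, to absorb that case.
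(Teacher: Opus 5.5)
Your proof is correct and follows essentially the same route as the paper. Both arguments put the orbit directions $Z_1A$, $AZ_2$ (the paper's $UZAV^*$, $UAZ^*V^*$) into the tangent space, cancel the Hermitian terms $AA^*$ and $A^*A$ against skew-Hermitian $Z$, apply Lemma~\ref{skew}(1) to get $DA^*$ and $A^*D$ Hermitian, and conclude with Lemma~\ref{RD}. Your preliminary reduction to $Y=D$ via the isometry $Z\mapsto U^*ZV$ is only cosmetic. Lemma~\ref{RD} does absorb the rectangular case as you expected: Hermitianity of $D^*A$ forces $\overline{d_i}A_{ij}=0$ for $j>n$.
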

\begin{proof}
    Consider a singular value decomposition of $Y=UDV^*$ for U $\in U(n)$, V$\in U(t)$ and D $\in \mathbb{C}^{n\times t}$. Let X be an HD critical point of Y with respect to $M$. Let A=$U^*XV$, consider the map from F : $U(n) \to M$ given by W $\to WAV^*$
    \[
[\nabla F(U)](B) = B A V^* \in T_{X}(M),
\]
for any \( B \in T_{U}(U(n)) \). By Lemma \ref{tsp}, we may write \( B = UZ \) for a skew-Hermitian matrix \( Z \), 
\[
UZ A V^* \in T_{X}(M).
\]
 we see that the tangent space of \( M \) at \( X \) contains
\[
\{ UZ A V^* : Z^*= -Z \}.
\]
Then, by the definition of an HD critical point, we have
\[
\mathrm{Re}\mathrm{Tr}\left((Y - X)(UZ A V^*)^*\right) = 0
\]
for any skew-Hermitian matrix \( Z \).
\[
0 = \mathrm{Re}\mathrm{Tr}\left(U(D - A)V^*V A^* Z^* U^*\right) = \mathrm{Re}\mathrm{Tr}\left((D - A)A^* Z^*\right).
\]
 by Lemma \ref{skew} this means $(D-A)A^*$ is Hermitian also note $(D-A)A^*$ is 0 when the imaginary part of the trace vanishes as well. Since $AA^*$ is Hermitian, we
have that $DA^*$ is Hermitian; therefore the transpose $AD^*$ is Hermitian. Similarly we will show $D^*A$ is Hermitian
By considering \( F : U(t) \to M \) given by \( W \mapsto U A W^* \), we get, as above, that  
\[
\{ U A Z^* V^* : Z^* = -Z \} \subseteq T_{X}(M).
\]  
It follows that  
\[
0 = \mathrm{Re}\mathrm{Tr}\left( (U(D - A)V^*)^* U A Z^* V^* \right) = \mathrm{Re}\mathrm{Tr}\left( (D - A)^* A Z^* \right)
\]  
for any skew-Hermitian matrix \( Z \), \( (D - A)^* A \) is Hermitian
.  
Again, since \( A^* A \) is Hermitian, we get that \( D^* A \) is Hermitian.  
Since \( A D^* \) and \( D^* A \) are both Hermitian, we conclude that \( A \) is real diagonal by Lemma \ref{RD}.
\end{proof}
\begin{lemma}\label{des}
Let $S \subseteq \mathbb{R}^n$ be a $\Pi_n^{\pm}$-invariant real algebraic set and set
\[
M:=\sigma^{-1}(S)
=\{\,U\mathrm{diag}(x)V^* : x\in S,\ U\in U(n),\ V\in U(t)\,\}.
\]
If $Q\subseteq S$ is Euclidean dense in $S$, then
\[
N_Q:=\{\,U\mathrm{diag}(x)V^* : x\in Q,\ U\in U(n),\ V\in U(t)\,\}
\]
is Euclidean dense in $M$.
\end{lemma}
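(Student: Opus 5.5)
The plan is to show directly that every point of $M$ is a Euclidean limit of points of $N_Q$, using continuity of the parametrization
\[
\Phi: U(n)\times S\times U(t)\to \mathbb{C}^{n\times t},\qquad (U,x,V)\mapsto U\,\mathrm{diag}(x)\,V^*.
\]
By definition $M=\Phi(U(n)\times S\times U(t))$ and $N_Q=\Phi(U(n)\times Q\times U(t))$. Because $Q\subseteq S$, we have $N_Q\subseteq M$, so only density needs proof.

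Take an arbitrary $X\in M$ and write $X=U\,\mathrm{diag}(x)\,V^*$ with $x\in S$ and $U\in U(n)$, $V\in U(t)$. Since $Q$ is Euclidean dense in $S$, there is a sequence $x_k\in Q$ with $x_k\to x$ in $\mathbb{R}^n$. Put $X_k:=U\,\mathrm{diag}(x_k)\,V^*\in N_Q$, keeping $U,V$ fixed. Unitary multiplication preserves the Frobenius norm $\|\cdot\|$ induced by $q$, so
\[
\|X_k-X\|=\|U\,\mathrm{diag}(x_k-x)\,V^*\|=\|\mathrm{diag}(x_k-x)\|=\|x_k-x\|\to 0 .
\]
Hence $X\in\overline{N_Q}$, and therefore $\overline{N_Q}\supseteq M$.

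Nothing here is genuinely hard. The only point needing care is that the representation $X=U\,\mathrm{diag}(x)\,V^*$ is not unique: $x$ may be any signed permutation of the singular values. This causes no trouble, because the argument fixes one representation with $x\in S$, and such a representation exists by the definition of $\sigma^{-1}(S)$. No use of the absolute symmetry of $S$ is needed beyond the fact that $M=\sigma^{-1}(S)$ is well defined.

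If the intended meaning is density in the relative topology of $M$ viewed as a subset of $V^{\mathbb R}$, the same sequence suffices, since $X_k\in M$. Equivalently, one can argue that the image of a dense set under the continuous surjection $\Phi$ is dense in the image.
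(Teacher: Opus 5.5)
Your proposal is correct and follows the paper's proof: fix one representation $X=U\,\mathrm{diag}(x)\,V^*$ with $x\in S$, approximate $x$ by $x_k\in Q$, and keep $U,V$ fixed. The only cosmetic difference is that you check convergence using unitary invariance of the Frobenius norm, whereas the paper cites continuity of $z\mapsto U\,\mathrm{diag}(z)\,V^*$.
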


\begin{proof}
Fix $A=U\mathrm{diag}(x)V^*\in M$. Since $Q$ is dense in $S$, pick $x_k\in Q$ with $x_k\to x$.
For fixed $U,V$, the map
\[
\phi_{U,V}: \mathbb{R}^n \to \mathbb{C}^{n\times t}, \qquad z \mapsto U\mathrm{diag}(z)V^*
\]
is continuous, hence $\phi_{U,V}(x_k)\to \phi_{U,V}(x)=A$.
\end{proof}

\begin{lemma}\label{as}\cite[Theorem~4.8]{costeSAG}
Let $M \subset \R^n$ and $N \subset \R^p$ be semialgebraic smooth submanifolds, and let
$f : M \to N$ be a semialgebraic $C^\infty$ mapping. Then the set of critical values of $f$
is a semialgebraic subset of $N$, of dimension $< \dim N$.
\end{lemma}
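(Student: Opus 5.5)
The plan is the standard two-part argument: show that the set of critical values is semialgebraic, and then bound its dimension using the classical Sard theorem together with the fact that a semialgebraic set of full dimension has nonempty interior.

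\emph{Semialgebraicity.} Let $d=\dim M$ and $e=\dim N$. I first check that the relevant differential data are semialgebraic.
\begin{enumerate}
\item The tangent bundle $TM=\{(x,v): x\in M,\ v\in T_xM\}$ is a semialgebraic subset of $\R^n\times\R^n$. Locally $M$ is the graph of a semialgebraic $C^\infty$ map over a coordinate $d$-plane, and the derivative of a semialgebraic $C^\infty$ function is semialgebraic, because it is defined by a first-order formula (the $\varepsilon$--$\delta$ limit of difference quotients). Tarski--Seidenberg then makes $TM$ first-order definable. The same holds for $TN$.
\item The differential $df:TM\to TN$ is semialgebraic for the same reason: $df_x(v)$ is a first-order definable limit of difference quotients of $f$ along curves in $M$.
\item The critical locus is
\[
\mathrm{Crit}(f)=\{x\in M : \dim df_x(T_xM)<e\}.
\]
It is defined by a first-order formula quantifying over bases of $T_xM$, or equivalently by the vanishing of all $e\times e$ minors of $df_x$ in semialgebraic local frames. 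Hence $\mathrm{Crit}(f)$ is semialgebraic.
\item The set of critical values $f(\mathrm{Crit}(f))$ is the image of a semialgebraic set under a semialgebraic map. By Tarski--Seidenberg it is semialgebraic in $\R^p$, and it lies in $N$.
\end{enumerate}

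\emph{Dimension bound.} Let $C=f(\mathrm{Crit}(f))$. By the classical Sard theorem for $C^\infty$ maps between manifolds, $C$ has measure zero in $N$. This uses second countability of submanifolds of Euclidean space, so measure zero makes sense in local charts.

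Suppose, for contradiction, that $\dim C=e$. A semialgebraic subset of an $e$-dimensional semialgebraic manifold with dimension $e$ contains a nonempty open subset of $N$. To see this, take a cylindrical or cell decomposition of $C$ in a semialgebraic chart of $N$; some cell has dimension $e$, and that cell is open in the chart. A nonempty open subset of $N$ has positive measure, which contradicts Sard. Hence $\dim C<e$.

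\emph{Main obstacle.} The delicate step is the semialgebraicity of $\mathrm{Crit}(f)$. One has to be careful that the derivative of a semialgebraic $C^\infty$ map is again semialgebraic, and that the rank condition can be expressed without choosing a non-definable frame. I would handle this with local semialgebraic graph charts, which exist by the implicit function theorem applied to a projection onto a coordinate $d$-plane. In such a chart the Jacobian entries are explicit definable limits.

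\emph{Alternative route.} If one prefers to avoid the analytic Sard theorem, one can partition $\mathrm{Crit}(f)$ into finitely many semialgebraic $C^1$ cells on which $f$ has constant rank $r<e$. By the constant rank theorem, each image $f(\text{cell})$ has dimension $\le r<e$. Taking the finite union bounds $\dim C<e$ directly.
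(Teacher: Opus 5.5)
The paper gives no proof of this statement: it is quoted from \cite[Theorem~4.8]{costeSAG}, so there is no internal argument to compare with. Your proof is correct. Tarski--Seidenberg gives semialgebraicity of the critical locus and of its image. Classical Sard gives measure zero. An $e$-dimensional semialgebraic subset of $N$ has interior in $N$; most cleanly, an $e$-dimensional cell of $C$ is an $e$-dimensional $C^\infty$ submanifold contained in $N$, hence open in $N$. Together these give the bound.

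The usual purely semialgebraic proof avoids measure theory. Suppose $\dim C=\dim N$, and take a nonempty semialgebraic $U\subseteq C$ open in $N$. Semialgebraic choice gives a semialgebraic section $s\colon U\to\mathrm{Crit}(f)$ with $f\circ s=\mathrm{id}_U$. Shrink $U$ so that $s$ is $C^\infty$ there, and differentiate to get $df_{s(y)}\circ ds_y=\mathrm{id}_{T_yN}$. Then $df_{s(y)}$ is surjective, contradicting that $s(y)$ is critical. That argument works verbatim over any real closed field, where Lebesgue measure and analytic Sard are unavailable. Yours is shorter once classical Sard is taken for granted, and both need the same semialgebraicity preliminaries.

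Two spots should be tightened.

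\emph{Definability of $df$.} Defining $df_x(v)$ as a limit of difference quotients \emph{along curves in $M$} is not first-order, since you cannot quantify over curves. Instead, note that the graph $\Gamma_f$ is a semialgebraic $C^\infty$ submanifold. Its tangent bundle is semialgebraic by the tangent-cone description: $v\in T_zZ$ iff for every $\varepsilon>0$ there are $z'\in Z$ and $t>0$ with $|z'-z|<\varepsilon$ and $|z'-z-tv|<\varepsilon t$. Finally, $T_{(x,f(x))}\Gamma_f$ is the graph of $df_x$.

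\emph{Your alternative route.} The constant rank must be that of the restriction $f|_D$ to a cell $D$, which is at most $\mathrm{rank}\,df_x<e$. The local constant-rank theorem alone does not give the global bound $\dim f(D)\le r$. You need one more semialgebraic input, such as the fibre-dimension formula (every fibre of $f|_D$ has dimension $\dim D-r$, so $\dim f(D)=r$) or the section argument above.
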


\begin{lemma}\label{tsM}
Consider a $\Pi_n^{\pm}$-invariant real algebraic set $S \subseteq \mathbb{R}^n$ and the induced variety
$M := \sigma^{-1}(S)\subseteq \C^{n\times t}$ (viewed inside $V^{\R}\cong \R^{2nt}$).
Then there exists a semialgebraic open dense subset $M^\circ\subseteq M^{\mathrm{reg}}$ such that:
for every $X\in M^\circ$ and every decomposition $X=U\mathrm{diag}(x)V^*$ with $x\in S^{\mathrm{reg}}$, one has
\[
T_{X}(M) = \left\{ UZ_1 \mathrm{diag}(x) V^{*} + U \mathrm{diag}(x) Z_2^{*} V^{*} + U \mathrm{diag}(a) V^{*}
\,:\, a \in T_{x}(S),\; Z_1, Z_2 \text{ skew-Hermitian} \right\}.
\]
\end{lemma}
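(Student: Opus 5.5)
The plan is to present $M$ as the image of a smooth semialgebraic map and to read off $T_XM$ as the image of its differential at points where that map is a submersion onto $M$. Consider
\[
\Phi: U(n)\times S^{\mathrm{reg}}\times U(t)\to V^{\R},\qquad (U,x,V)\mapsto U\,\mathrm{diag}(x)\,V^*.
\]
Its image contains $N_{S^{\mathrm{reg}}}$. Since $S^{\mathrm{reg}}$ is dense in $S$, Lemma~\ref{des} shows that this image is Euclidean dense in $M$.

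First I compute the differential of $\Phi$. By Lemma~\ref{tsp}, tangent vectors to $U(n)$ at $U$ have the form $UZ_1$, and tangent vectors to $U(t)$ at $V$ have the form $VZ_2$, with $Z_1,Z_2$ skew-Hermitian. Differentiating $(U e^{sZ_1})\,\mathrm{diag}(x+sa)\,(Ve^{sZ_2})^*$ at $s=0$ gives
\[
d\Phi_{(U,x,V)}(UZ_1,a,VZ_2)=UZ_1\mathrm{diag}(x)V^*+U\mathrm{diag}(a)V^*+U\mathrm{diag}(x)Z_2^*V^*.
\]
So the right-hand side of the claimed formula is exactly $\operatorname{im} d\Phi_{(U,x,V)}$. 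Since $\Phi$ maps into $M$, this image is always contained in $T_XM$ whenever $X\in M^{\mathrm{reg}}$. The whole task is to choose $M^\circ$ on which the reverse inclusion holds, and holds for \emph{every} decomposition of $X$.

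Next I handle the choice of decomposition. Given two decompositions $X=U\mathrm{diag}(x)V^*=U'\mathrm{diag}(x')V'^*$, the vector $x'$ is a signed permutation of $x$ up to phases. Because the entries are real, $x'=\pi x$ for some $\pi\in\Pi_n^\pm$, and the unitary factors differ by elements of the stabilizer. Using the $\Pi_n^\pm$-invariance of $S$, which sends $T_xS$ to $T_{\pi x}S$, together with the fact that conjugating a skew-Hermitian matrix by a unitary keeps it skew-Hermitian, one checks directly that the space $\operatorname{im} d\Phi$ does not depend on the decomposition. Hence equality only has to be checked for one decomposition.

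The remaining step, and the main obstacle, is to show that $\operatorname{rank} d\Phi=\dim T_XM$ on an open dense semialgebraic set. I would argue as follows. $\Phi$ is a semialgebraic $C^\infty$ map, and I restrict it to the semialgebraic preimage of $M^{\mathrm{reg}}$. By the constant-rank/Sard philosophy of Lemma~\ref{as}, the critical values of $\Phi$, viewed as a map into the manifold $M^{\mathrm{reg}}$ (working componentwise by dimension), form a semialgebraic set of dimension $<\dim M$. The same holds for the image of the lower-dimensional strata $\Phi\big(U(n)\times (S\setminus S^{\mathrm{reg}})\times U(t)\big)$, a semialgebraic set which I must show has dimension $<\dim M$. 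Define $M^\circ$ to be $M^{\mathrm{reg}}$ minus the closure of the union of these two bad sets. It is open, semialgebraic, and dense because its complement has smaller dimension.

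At $X\in M^\circ$, every decomposition with $x\in S^{\mathrm{reg}}$ is a regular point, so $d\Phi$ is surjective onto $T_XM$. This gives the equality. The delicate part is the dimension count for images of the singular strata, and verifying that density of the image from Lemma~\ref{des} forces $\Phi$ to be generically submersive. For the latter I would use that a semialgebraic map whose image has full dimension $\dim M$ must attain rank $\dim M$ on an open set, by Lemma~\ref{as}.
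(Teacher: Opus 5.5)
Your core argument is the paper's: the same map $\Phi(U,x,V)=U\,\mathrm{diag}(x)\,V^*$ on $U(n)\times S^{\mathrm{reg}}\times U(t)$, the same computation of $d\Phi$, and the semialgebraic Sard theorem (Lemma~\ref{as}) to discard critical values. After that, surjectivity of $d\Phi$ at \emph{every} preimage of a non-critical value gives the formula for every decomposition with $x\in S^{\mathrm{reg}}$. Two of your refinements genuinely improve on the paper. You restrict $\Phi$ to $\Phi^{-1}(M^{\mathrm{reg}})$, and you remove the \emph{closure} of the critical values. The paper instead applies Lemma~\ref{as} with target $M^{\mathrm{reg}}$ without that restriction, and asserts without justification that $M^{\mathrm{reg}}\setminus\mathrm{CritVal}(\Phi)$ is open.

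Several of your auxiliary steps should, however, be deleted.

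\emph{Independence of the decomposition.} The claim that one ``checks directly'' that $\operatorname{im} d\Phi_{(U,x,V)}$ does not depend on the decomposition is false when singular values repeat. Take $S=\{x_1^2+x_2^2=2\}$ and $X=I_2=h\,\mathrm{diag}(1,1)\,h^*$ with $h\in U(2)$ arbitrary. The image is $\mathfrak{u}(2)\oplus\R\,h\,\mathrm{diag}(1,-1)\,h^*$, which changes with $h$: compare $h=I$ with the rotation by $\pi/4$, which produces $\left(\begin{smallmatrix}0&1\\1&0\end{smallmatrix}\right)$. Independence holds only a posteriori on $M^\circ$, where every such image equals $T_XM$. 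Since your final step already handles every preimage, the claim is superfluous.

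\emph{Removing the singular strata.} Removing $\Phi\bigl(U(n)\times(S\setminus S^{\mathrm{reg}})\times U(t)\bigr)$ is unnecessary. The lemma only quantifies over decompositions with $x\in S^{\mathrm{reg}}$, so it is vacuous at points that have none. Keeping this removal makes the density of your $M^\circ$ depend on a dimension bound you have not proved, and that bound is not automatic. For example, with $n=t=2$, read $S^{\mathrm{reg}}$ as the top-dimensional smooth locus and take $S=\{x_1^2=x_2^2\}\cup\Pi_2^{\pm}\{(1,2)\}$. Then the removed set contains all of $M^{\mathrm{reg}}$, the $6$-dimensional orbit of $\mathrm{diag}(1,2)$.

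\emph{Density and submersivity.} Neither density of $\operatorname{im}\Phi$ (Lemma~\ref{des}) nor generic submersivity of $\Phi$ is needed. Sard alone makes $M^\circ$ dense, and the conclusion is vacuous off $\operatorname{im}\Phi$.
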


\begin{proof}
 $V^{\R}\cong \R^{2nt}$, so that $U(n)$ and $U(t)$ are smooth real algebraic
submanifolds (defined by polynomial equations in real and imaginary parts). Consider the semialgebraic
$C^\infty$ map
\[
\Phi: U(n)\times S^{\mathrm{reg}}\times U(t)\longrightarrow V^{\R},\qquad
(U,x,V)\longmapsto U\,\mathrm{diag}(x)\,V^* .
\]
Its image is contained in $M$, and it is Euclidean dense in $M$ ( Lemma \ref{des}).

Apply Lemma~\ref{as} to $\Phi$ with target manifold $N:=M^{\mathrm{reg}}$:
the set of critical values $\mathrm{CritVal}(\Phi)\subseteq M^{\mathrm{reg}}$ is semialgebraic of
dimension $<\dim M^{\mathrm{reg}}$. Hence
\[
M^\circ := M^{\mathrm{reg}}\setminus \mathrm{CritVal}(\Phi)
\]
is semialgebraic open dense in $M^{\mathrm{reg}}$. For any $X\in M^\circ$ and any $(U,x,V)$ with
$\Phi(U,x,V)=X$, the differential $d\Phi_{(U,x,V)}$ is surjective onto $T_X(M)$.

Now compute $d\Phi$. We have
\[
T_UU(n)=\{UZ_1: Z_1^*=-Z_1\},\qquad T_VU(t)=\{VZ_2: Z_2^*=-Z_2\},
\qquad T_xS^{\mathrm{reg}}=T_xS.
\]
Therefore for $(UZ_1,a,VZ_2)\in T_UU(n)\times T_xS\times T_VU(t)$,
\begin{align*}
d\Phi_{(U,x,V)}(UZ_1,a,VZ_2)
&= UZ_1\,\mathrm{diag}(x)\,V^* \;+\; U\,\mathrm{diag}(a)\,V^*
\;+\; U\,\mathrm{diag}(x)\,(VZ_2)^* \\
&= UZ_1\,\mathrm{diag}(x)\,V^* \;+\; U\,\mathrm{diag}(a)\,V^*
\;+\; U\,\mathrm{diag}(x)\,Z_2^*\,V^* .
\end{align*}
Since $d\Phi_{(U,x,V)}$ is surjective, its image equals $T_X(M)$.
\end{proof}
\begin{lemma}[Generic data avoid the bad tangent locus]\label{genericg}
Let $M \subset V^{\R}$ be a real algebraic set and let
$M^\circ \subset M^{\mathrm{reg}}$ be the open dense subset from
Lemma~\ref{tsM}. Then there exists a semialgebraic open dense set
$\Omega \subset V^{\R}$ such that for every $Y \in \Omega$,
all HD critical points of $Y$ on $M$ lie in $M^\circ$.
\end{lemma}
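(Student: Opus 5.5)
The plan is to use the Euclidean distance correspondence (ED correspondence) on the real algebraic set $M\subset V^{\R}$ with respect to $q$, and to show that data points having a critical point in the bad locus $B:=M^{\mathrm{reg}}\setminus M^\circ$ form a semialgebraic set of dimension $<2nt$. Then $\Omega$ can be taken as the complement of its closure.

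First, consider the conormal-type incidence
\[
\mathcal{E}_B:=\{(X,Y)\in B\times V^{\R} : Y-X\perp_q T_XM\}.
\]
By Lemma~\ref{tsM}, $M^\circ$ is semialgebraic open dense in $M^{\mathrm{reg}}$, so $B$ is semialgebraic with $\dim B<\dim M^{\mathrm{reg}}=:m$. Since the tangent map $X\mapsto T_XM$ on $M^{\mathrm{reg}}$ is semialgebraic, $\mathcal{E}_B$ is a semialgebraic set. We stratify $B$ into finitely many semialgebraic smooth pieces. Over each point $X\in B$, the fibre of $\mathcal{E}_B$ is the affine normal space $X+(T_XM)^{\perp_q}$, which has dimension $2nt-m$. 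Hence
\[
\dim \mathcal{E}_B\le \dim B+(2nt-m)<2nt .
\]
Second, let $\pi_2:\mathcal{E}_B\to V^{\R}$ be the projection to the data point. Semialgebraic maps do not increase dimension (Coste), so $\pi_2(\mathcal{E}_B)$ is semialgebraic of dimension $<2nt$. Its closure $\overline{\pi_2(\mathcal{E}_B)}$ is therefore semialgebraic of the same dimension, and in particular nowhere dense. We set
\[
\Omega:=V^{\R}\setminus \overline{\pi_2(\mathcal{E}_B)}.
\]
This set is open, dense and semialgebraic. For $Y\in\Omega$, an HD critical point $X$ lies in $M^{\mathrm{reg}}$ by definition. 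Since $Y\notin\pi_2(\mathcal{E}_B)$, it cannot lie in $B$, so $X\in M^\circ$. If desired, $\Omega$ can also be intersected with the complement of the ED discriminant, which is again a lower-dimensional semialgebraic set.

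The main obstacle is the dimension count for $\mathcal{E}_B$. It requires that $T_XM$ have constant dimension $m$ along $B$, which holds if $M^{\mathrm{reg}}$ is taken to be the top-dimensional smooth locus. If $M^{\mathrm{reg}}$ has components of different dimensions, I would run the argument separately on each equidimensional component $M_j$ of the smooth locus, using $B_j=B\cap M_j$ with $\dim B_j<\dim M_j$. I would also state explicitly that the map $X\mapsto (T_XM)^{\perp}$ is semialgebraic, via Jacobians of local defining equations, so that $\mathcal{E}_B$ is semialgebraic and the fibre-dimension bound applies.
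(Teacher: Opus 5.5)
Your proposal is correct and follows the paper's proof essentially step for step. Both use the incidence $\{(X,Y)\in B\times V^{\R}: Y-X\perp_q T_XM\}$ over the bad locus $B=M^{\mathrm{reg}}\setminus M^\circ$, the fibre-dimension bound $\dim B+(\dim V^{\R}-\dim M)<\dim V^{\R}$, and projection to the data space. Your extra steps fill in details the paper leaves implicit: taking the complement of the \emph{closure} of the projection so that $\Omega$ is genuinely open, checking that the incidence is semialgebraic, and treating a non-equidimensional smooth locus componentwise.
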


\begin{proof}
For $X \in M^{\mathrm{reg}}$, the HD critical condition is
$Y-X \in (T_XM)^{\perp_q}$.
For fixed $X$, this is an affine space of dimension
$\dim V^{\R}-\dim M$.

Let $B := M^{\mathrm{reg}}\setminus M^\circ$. By Lemma~\ref{tsM},
$\dim B < \dim M$. Consider
\[
\mathcal I=\{(X,Y)\in B\times V^{\R} : Y-X\in (T_XM)^{\perp_q}\}.
\]
Each fiber over $X$ has dimension $\dim V^{\R}-\dim M$, so
\[
\dim \mathcal I \le \dim B + (\dim V^{\R}-\dim M) < \dim V^{\R}.
\]
Hence its projection to $V^{\R}$ has dimension $<\dim V^{\R}$.
Outside this set, no critical point lies in $B$.
\end{proof}

\begin{lemma} \label{skew}
Let $<A,B> = \mathrm{Tr}(A B^{*})$ be the inner product on 
$M_n(\mathbb{C})$. Then:
\begin{enumerate}
\item If $\mathrm{Re}\,<A,Z> = 0$ for all skew Hermitian $Z$, then $A$ is Hermitian.
\item If $<A,Z> = 0$ for all skew Hermitian $Z$, then $A = 0$.
\end{enumerate}
\end{lemma}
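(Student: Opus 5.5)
The plan is to use the real-linear algebra of the Hermitian pairing on $M_n(\mathbb{C})$. The key fact is that $M_n(\mathbb{C})$ splits, as a real vector space, into the direct sum of Hermitian matrices $\mathcal{H}$ and skew-Hermitian matrices $\mathcal{K}$. These two pieces are orthogonal for $\mathrm{Re}\langle\cdot,\cdot\rangle$. To check this, take $H\in\mathcal H$ and $Z\in\mathcal K$. Then
\[
\overline{\mathrm{Tr}(HZ^*)}=\mathrm{Tr}\big((HZ^*)^*\big)=\mathrm{Tr}(ZH)=\mathrm{Tr}(HZ)=-\mathrm{Tr}(HZ^*),
\]
so $\mathrm{Tr}(HZ^*)$ is purely imaginary and $\mathrm{Re}\langle H,Z\rangle=0$.

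For (i), write $A=H+K$ with $H=\tfrac12(A+A^*)$ and $K=\tfrac12(A-A^*)$. Apply the hypothesis with the particular choice $Z=K$. By the orthogonality above, $\mathrm{Re}\langle A,K\rangle=\mathrm{Re}\langle K,K\rangle=\mathrm{Tr}(KK^*)=\|K\|^2$. This vanishes, so $K=0$, and hence $A=H$ is Hermitian.

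For (ii), the full hypothesis $\langle A,Z\rangle=0$ in particular gives $\mathrm{Re}\langle A,Z\rangle=0$ for every skew-Hermitian $Z$. By (i), $A$ is Hermitian. Next we use that $iH$ is skew-Hermitian whenever $H$ is Hermitian. Taking $Z=iA$ gives
\[
0=\langle A,iA\rangle=\mathrm{Tr}\big(A(iA)^*\big)=-i\,\mathrm{Tr}(AA^*)=-i\|A\|^2,
\]
so $A=0$.

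There is no serious obstacle here. The only point needing care is the conjugation convention $\langle A,B\rangle=\mathrm{Tr}(AB^*)$, which is conjugate-linear in the second slot. The argument never depends on the sign of the factor $\pm i$ in the last step, so the convention does not affect the conclusion.
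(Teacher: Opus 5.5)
Your proof is correct and follows the paper's argument: the same splitting $A=H+K$, the same $\mathrm{Re}$-orthogonality of Hermitian and skew-Hermitian matrices, and the same test matrix $Z=K$ for (i). Your conjugation chain is actually cleaner than the paper's, which has a sign slip at $\mathrm{Tr}(KH)=\mathrm{Tr}((HK)^*)$. For (ii) you streamline slightly by invoking (i) and then testing only against $Z=iA$. The paper instead tests against all $iH$ and uses $B=H_1+iH_2$ to get $A\perp M_n(\mathbb{C})$, but the underlying trick, that $i$ times a Hermitian matrix is skew-Hermitian, is the same.
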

\begin{proof}
Every matrix $A$ decomposes uniquely as
\[
A = H + K, \qquad 
H := \tfrac12(A + A^{*})\ \text{(Hermitian)}, \qquad
K := \tfrac12(A - A^{*})\ \text{(skew Hermitian)}.
\]

If $H$ is Hermitian and $K$ is skew Hermitian, then
\[
<H,K> = \mathrm{Tr}(H K^{*}) = -\,\mathrm{Tr}(H K).
\]
On the other hand,
\[
\overline{<H,K>}
= \overline{\mathrm{Tr}(H K^{*})}
= \mathrm{Tr}((H K^{*})^{*})
= \mathrm{Tr}(K H)
= \mathrm{Tr}((H K)^{*})
= \overline{\mathrm{Tr}(H K)}.
\]
Thus $\mathrm{Tr}(H K)$ is purely imaginary, and therefore
\[
\mathrm{Re}\,<H,K> = 0. \tag{$*$}
\]

(1) Real-part condition.
Assume $\mathrm{Re}\,<A,Z> = 0$ for all skew Hermitian $Z$.
Taking $Z = K$, we obtain
\[
0 = \mathrm{Re}\,<A,K>
= \mathrm{Re}\,<H+K,K>
= \mathrm{Re}\,<H,K> + \mathrm{Re}\,<K,K>.
\]
By $(*)$, $\mathrm{Re}\,<H,K>=0$, so
\[
0 = \mathrm{Re}\,<K,K> = \mathrm{Tr}(K K^{*}) \ge 0.
\]
Hence $K = 0$ and $A = H$ is Hermitian.

(2) Full inner-product condition.
Assume $<A,Z> = 0$ for all skew Hermitian $Z$.
If $H$ is Hermitian, then $Z = iH$ is skew Hermitian, so
\[
0 = <A,iH> = i <A,H> \quad\Rightarrow\quad <A,H> = 0
\]
for all Hermitian $H$.  
Any matrix $B$ can be written as $B = H_1 + iH_2$ with $H_1,H_2$ Hermitian, hence
\[
<A,B> = <A,H_1> + i <A,H_2> = 0.
\]
Thus $A$ is orthogonal to all matrices, so
\[
0 = <A,A> = \mathrm{tr}(A A^{*}),
\]
which implies $A = 0$.
\end{proof}

\begin{theorem}[Real ED degree of $S$ and HD degree of $M$]\label{main}
Consider a $\Pi^{\pm}_n$-invariant variety $S \subseteq \mathbb{R}^n$ and the
induced real variety $M := \sigma^{-1}(S) \subseteq \mathbb{C}^{n \times t}$.
Then for all $Y\in\C^{n\times t}$ in a semialgebraic open dense subset ,
$Y$ admits an SVD
$
Y = U\mathrm{diag}(y)V^*
$ with $U \in U(n)$, $V \in U(t)$, and $y \in \mathbb{R}^n$.
Moreover, the set of Hermitian distance critical points of $Y$ with respect to $M$ is
\[
\{\, U \mathrm{diag}(x) V^{*} : x \text{ is an ED critical point of } y 
\text{ with respect to } S \,\}.
\]
In particular, the Hermitian distance degree of $M$ equals the real Euclidean distance
degree of $S$:
\[
\mathrm{HDdeg}(M) = \mathrm{\mathbb{R}EDdeg}(S).
\]
\end{theorem}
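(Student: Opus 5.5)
We prove the statement in three steps: choose a generic set of data matrices, show that every HD critical point is diagonal in the singular bases of $Y$, and then match the HD and ED critical conditions through the tangent-space description of Lemma~\ref{tsM}.

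\textbf{Choice of the generic set.} Let $\Omega$ be the intersection of three sets:
\begin{itemize}
\item[(a)] the matrices $Y$ for which the eigenvalues of $YY^*$ are nonzero and pairwise distinct (the complement of a discriminant hypersurface);
\item[(b)] the set of Lemma~\ref{genericg};
\item[(c)] the preimage, under the continuous semialgebraic singular-value map, of the complement of the ED discriminant of $S$, together with its signed-permutation images.
\end{itemize}
Each of these is semialgebraic open dense, so $\Omega$ is as well. By the SVD, every $Y\in\Omega$ can be written $Y=U\mathrm{diag}(y)V^*$ with $y\in\R^n$, and the entries $y_i$ are nonzero with $|y_i|$ pairwise distinct.

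\textbf{HD critical points are diagonal.} Let $X$ be an HD critical point of $Y$. Theorem~\ref{sim} gives $X=U\mathrm{diag}(x)V^*$ with $x\in\R^n$, using the same $U,V$. Since $M=\sigma^{-1}(S)$ is unitary invariant, $\mathrm{diag}(x)\in M$, so $x\in\sigma(M)=S$. By (b), $X\in M^\circ$. We also need $x\in S^{\mathrm{reg}}$. For this we would shrink $M^\circ$ if necessary so that every diagonal representative of a point of $M^\circ$ has $x\in S^{\mathrm{reg}}$. This is possible because the image of $\sigma^{-1}(S\setminus S^{\mathrm{reg}})$ has dimension $<\dim M$, and the argument of Lemma~\ref{genericg} still applies after the shrinking.

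\textbf{Matching the critical conditions.} With this in place, Lemma~\ref{tsM} describes $T_XM$, and we check $q(Y-X,\cdot)=0$ term by term. Write $\Delta=\mathrm{diag}(y-x)$.
\begin{itemize}
\item For the $Z_1$-term, $\mathrm{Re}\,\mathrm{Tr}\bigl(\Delta\,\mathrm{diag}(x)^*Z_1^*\bigr)$ vanishes because $\Delta\,\mathrm{diag}(x)^*$ is real diagonal, hence Hermitian, and Hermitian and skew-Hermitian matrices are $q$-orthogonal (the identity $(*)$ in the proof of Lemma~\ref{skew}).
\item The $Z_2$-term vanishes in the same way.
\item The remaining term is $\mathrm{Re}\,\mathrm{Tr}\bigl(\Delta\,\mathrm{diag}(a)^*\bigr)=(y-x)^\top a$.
\end{itemize}
Therefore $X$ is HD critical if and only if $(y-x)^\top a=0$ for all $a\in T_xS$, that is, if and only if $x$ is an ED critical point of $y$ on $S$.

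Conversely, let $x$ be an ED critical point of $y$ on $S$. The same computation shows that $U\mathrm{diag}(x)V^*$ is HD critical, provided it lies in $M^{\mathrm{reg}}$ with the tangent space given by Lemma~\ref{tsM}. We would secure this with a genericity condition on $y$: the ED critical points of a generic $y$ avoid the lower-dimensional semialgebraic set $\sigma(M\setminus M^\circ)\cap S$, by a dimension count like the one in Lemma~\ref{genericg}, carried out in $\R^n$.

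\textbf{Counting and the main obstacle.} To get the numerical equality we must show that $x\mapsto U\mathrm{diag}(x)V^*$ is injective on ED critical points. Since $U,V$ are fixed, $U\mathrm{diag}(x)V^*=U\mathrm{diag}(x')V^*$ forces $x=x'$, so the map is injective. This yields $\mathrm{HDdeg}(M;Y)=\mathrm{\mathbb{R}EDdeg}(S;y)$ on each chamber. The main obstacle is exactly the genericity bookkeeping above: ensuring that critical points in both directions land where Lemma~\ref{tsM} applies, namely $X\in M^\circ$ with $x\in S^{\mathrm{reg}}$. We would handle it with the semialgebraic dimension argument already used in Lemmas~\ref{tsM} and~\ref{genericg}.
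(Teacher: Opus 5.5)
Your proposal is correct and follows essentially the same route as the paper: Theorem~\ref{sim} puts every HD critical point in the form $U\mathrm{diag}(x)V^*$ with $x\in S$, and Lemmas~\ref{tsM} and~\ref{genericg} describe $T_XM$. A term-by-term check then shows that the two skew-Hermitian families are automatically $q$-orthogonal to $U\mathrm{diag}(y-x)V^*$, while the $\mathrm{diag}(a)$ family reduces to $(y-x)^\top a=0$. Your extra genericity bookkeeping (ensuring $x\in S^{\mathrm{reg}}$, and a condition on $y$ so that lifted ED critical points land in $M^\circ$, which the paper simply assumes with ``if moreover $X\in M^\circ$'') fills in points the paper leaves implicit.
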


\begin{proof}
Let $Y\in \mathbb{C}^{n \times t }$ with distinct singular values and let $Y=U\mathrm{diag}(y)V^*$ be an SVD of Y.   Let $X$ be an HD critical point of $Y$ with respect to $M$. By Lemma \ref{sim} we may assume that $X$ can be written as
\[
X = U \mathrm{diag}(x) V^*
\]
for some $x \in S$.

We can further assume that $X$ is generic and by Lemma \ref{genericg} the tangent space $T_{X}(M)$ at $X = U \mathrm{diag}(x) V^*$ is given in Lemma \ref{tsM}.

We will now show that $x$ is an ED critical point of $y$ with respect to $S$. To see this, observe the inclusion
\[
\left\{ U \mathrm{diag}(a) V^* : a \in T_{x}(S) \right\} \subseteq T_X(M),
\]
and since $y$, $x$ and $a$ are in $\mathbb{R}^n$ the trace is real and hence we have,
\[
0 = \mathrm{Tr}\left( U \mathrm{diag}(y - x) V^* \left( U \mathrm{diag}(a) V^* \right)^* \right)
\quad \text{for any } a \in T_{x}(S).
\]

therefore $0 = \mathrm{Tr} \left( U \, \mathrm{diag}(y - x) \, V^* \cdot V \, \mathrm{diag}(\bar{a}) \, U^* \right)$

\[
(y - x)^\top \overline{a} = 0 \quad \text{for any } a \in T_{x}(S),
\]
and hence $x$ is an ED critical point of $y$ with respect to $S$.
Conversely, suppose \( x \in S^{\mathrm{reg}} \) is an ED critical point of \( y \) with respect to \( S \).

Define now the matrix \( X := U \, \mathrm{diag}(x) \, V^* \). 
If moreover $X\in M^\circ$ (the open dense set from Lemma~\ref{tsM}),
then the tangent space at $X$ has the stated form and is generated by:

\begin{enumerate}
  \item \( U Z \mathrm{diag}(x) V^* \), with \( Z \) skew-Hermitian,
  \item \( U \mathrm{diag}(a) V^* \), where \( a \in T_{x}(S) \),
  \item \( U \mathrm{diag}(x) Z^* V^* \), with \( Z \) skew-Hermitian.
\end{enumerate}

We will show for each generator G as in (i) (ii) (iii), we have the $\mathrm{Re}  (\mathrm{Tr}((X-Y)G^*))$ vanishes

For (i) and (iii), observe since x,y and a are all in $\mathbb{R}^n$ and $\mathrm{Re}(\mathrm{Tr}(AB^*))=0$ for A Hermitian and B skew Hermitian:
\begin{equation}
\begin{aligned}
\mathrm{Re} \mathrm{Tr}\!\left( (X - Y)(U Z_1 \mathrm{diag}(x) V^*)^* \right)
&= \mathrm{Re} \mathrm{Tr}\!\left( \mathrm{diag}(x - y)\mathrm{diag}({x})^* Z_1^* \right) = 0 \\
&= \mathrm{Re} \mathrm{Tr}\!\left(\mathrm{diag}(x - y)\mathrm{diag}(x)Z_2 \right) \\
&= \mathrm{Re} \mathrm{Tr}\!\left( (X - Y)(U \mathrm{diag}(x)Z_2^* V^*)^* \right)
\end{aligned}
\label{comp}
\end{equation}
In case (ii) we have from the fact x is an ED critical point for y on S $$
\mathrm{Tr}\left((X - Y)(U \mathrm{diag}(a) V^*)^*\right)=\mathrm{Tr}(U\mathrm{diag}(x-y)\mathrm{diag}(a)^*U^*)
= \mathrm{Tr}\left(\mathrm{diag}(x - y) \mathrm{diag}(a)^*\right) = 0
$$
therefore we have the result $\mathrm{HDdeg}(M) = \mathrm{\mathbb{R}EDdeg}(S).$
 
\end{proof}

This theorem gives us the Hermitian analogue for the main result in \cite{drusvyatskiy2017euclidean}.\\
\begin{example}\label{paras}[Singular-Value Relation \(\{\sigma_1,\sigma_2\} = \{a,a^2\}\)]
For \(2\times 2\) matrices whose singular values satisfy
\[
\{\sigma_1, \sigma_2\} = \{a, a^2\} \qquad (a \ge 0),
\]
the corresponding absolutely symmetric set is
\[
S = \Pi^{\pm} \cdot \{ (a, a^2) : a \ge 0 \},
\]
i.e.\ the set of all signed permutations of the pair \((a,a^2)\).  
Equivalently, \(S\) may be described as the real algebraic set
\[
S = \{ (x_1, x_2) \in \mathbb{R}^2 :
(x_2 - x_1^{2})(x_1 - x_2^{2}) = 0 \},
\]
which is invariant under permutations and sign changes of the coordinates. 

Let $E_1$ (resp.\ $E_2$) be the evolute of the parabola $x_2=x_1^2$ (resp.\ $x_1=x_2^2$)(see figure \ref{fig:parabolas-evolutes}).
For a generic point $y$, the number of real ED critical points contributed by a given parabola
is $1$ if $y$ lies on the same side of its evolute as the parabola, and it is $3$ if $y$ lies on the other side.
Equivalently,
if $y$ is outside $E_1$, then $x_2=x_1^2$ contributes $1$ real critical point if $y$ is inside $E_1$, it contributes $3$. If $y$ is outside $E_2$, then $x_1=x_2^2$ contributes $1$ real critical point if $y$ is inside $E_2$, it contributes $3$.

Hence the total number of real ED critical points of $y$ on $S$ equals 2, 4, or 6, depending on
whether $y$ lies inside none, exactly one, or both of the evolutes. By Theorem \ref{main}, for a generic matrix with SVD $Y=U\mathrm{diag}(y)V^*$ the Hermitian distance critical points of $Y$ on
$M=\sigma^{-1}(S)$ are exactly the matrices $U\mathrm{diag}(x)V^*$ where $x$ ranges over these real ED critical points of $y$ on $S$.
Hence the HD critical count for $Y$ equals the ED critical count for $y$, and it jumps precisely when $y$ crosses the evolutes.

\end{example}
\begin{figure}[ht]
\centering
\begin{tikzpicture}
\begin{axis}[
  axis lines=middle,
  xmin=-2, xmax=6,
  ymin=-2, ymax=6,
  samples=120,
  xlabel={$x_1$}, ylabel={$x_2$},
  width=0.85\textwidth,
  height=0.7\textwidth,
  legend style={at={(0.02,0.98)},anchor=north west},
]

\addplot[thick, blue, domain=-2.2:2.2] {x^2};

\addplot[thick, green!60!black, domain=0:6] {sqrt(x)};
\addplot[thick, green!60!black, domain=0:6] {-sqrt(x)};

\addplot[thick, red, domain=0.5:6]
  ({ sqrt(16*(x-0.5)^3/27) }, x);
\addplot[thick, red, domain=0.5:6]
  ({-sqrt(16*(x-0.5)^3/27) }, x);

\addplot[thick, orange, domain=0.5:6]
  (x, { sqrt(16*(x-0.5)^3/27) });
\addplot[thick, orange, domain=0.5:6]
  (x, {-sqrt(16*(x-0.5)^3/27) });

\end{axis}
\end{tikzpicture}
\caption{The absolutely symmetric set 
$S=\{(x_1,x_2):(x_2-x_1^2)(x_1-x_2^2)=0\}$ (union of two parabolas) together with the evolutes.}
\label{fig:parabolas-evolutes}
\end{figure}
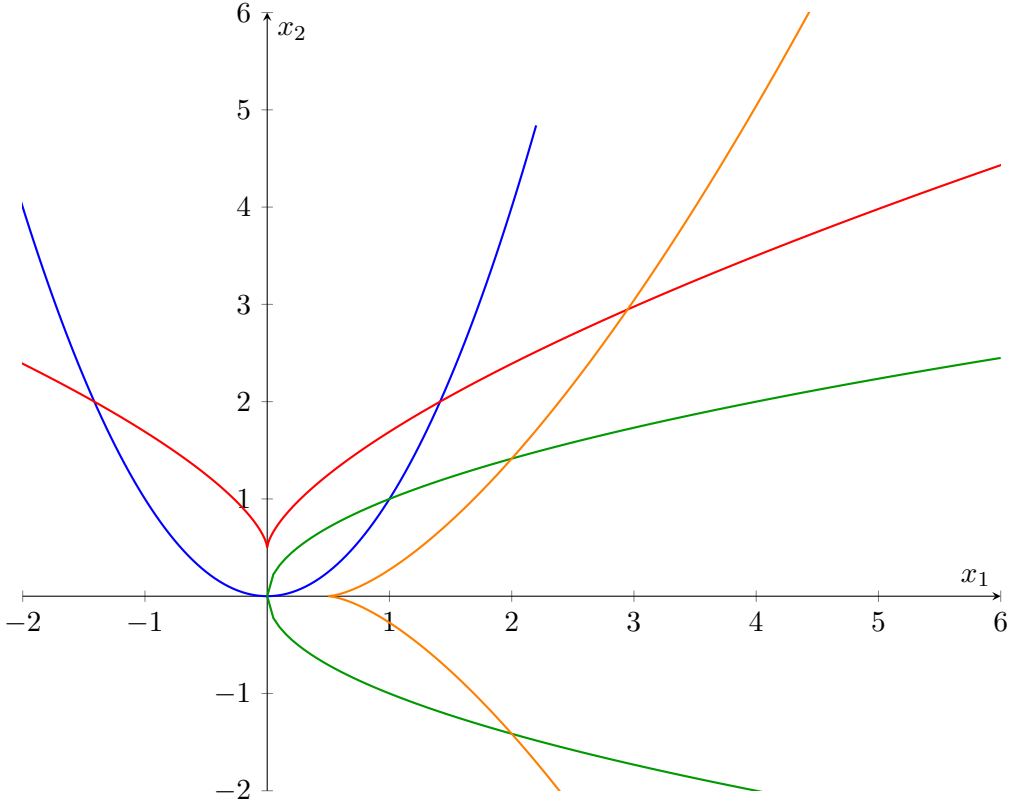
\begin{example}[Determinant-magnitude constraint in $2\times2$]\label{ex:det-mag-2x2}
Let
\[
M_{\det}:=\{A\in\C^{2\times2}:\ |\det A|=1\},
\]
viewed as a real algebraic set in $V^{\R}\cong\R^{8}$, equipped with
$q=\mathrm{ReTr}(AB^*)$.  This set is invariant under the left-right action of
$U(2)\times U(2)$ since $|\det(UAV^*)|=|\det A|$.

Its diagonal slice is
\[
S=\sigma(M_{\det})
=\{x\in\R^2:\ |\det(\mathrm{diag}(x))|=1\}
=\{x\in\R^2:\ |x_1x_2|=1\}
=H_2^+\cup H_2^-,
\]
where $H_2^{\pm}=\{x_1x_2=\pm1\}$.

Fix $y=(y_1,y_2)\in\R^2$ and parametrize the two branches by
\[
H_2^+:\ (t,1/t),\qquad H_2^-:\ (t,-1/t)\qquad (t\neq0).
\]
A point on $H_2^{\pm}$ is ED-critical for $y$ iff $t$ is a real root of
\[
q_y^{\pm}(t):=t^4-y_1t^3\pm y_2t-1.
\]
Let $D^{\pm}(y)$ be the discriminant of $q_y^{\pm}(t)$ as a polynomial in $t$:
\[
D^{+}(y)= -256 + 192y_1y_2 + 6y_1^2y_2^2 + 4y_1^3y_2^3 - 27y_1^4 - 27y_2^4,
\]
\[
D^{-}(y)= -256 - 192y_1y_2 + 6y_1^2y_2^2 - 4y_1^3y_2^3 - 27y_1^4 - 27y_2^4.
\]
For $y$ not on the semialgebraic curve $\{D^+(y)D^-(y)=0\}$, the number of real
ED-critical points of $y$ on $S=H_2^+\cup H_2^-$ is
\[
\mathrm{\mathbb{R}EDdeg}(S:y)=
\begin{cases}
6, & \text{if } D^+(y)>0 \text{ or } D^-(y)>0,\\
4, & \text{if } D^+(y)<0 \text{ and } D^-(y)<0,
\end{cases}
\]
see \cite[Example 4.4]{drusvyatskiyLeeThomas2015}.

Now let $Y\in\C^{2\times2}$ be generic and write an SVD $Y=U\mathrm{diag}(y)V^*$.
By Theorem~\ref{main}, the HD-critical points of $Y$ on $M_{\det}$ are exactly
\[
\{\,U\mathrm{diag}(x)V^*:\ x\in S \text{ is ED-critical for } y\,\},
\]
so the chamber-wise Hermitian critical count for $M_{\det}$ is $\{4,6\}$, and
it jumps precisely when $y$ crosses $\{D^+(y)D^-(y)=0\}$.
\end{example}

    \begin{example}[Unit Schatten $d$-sphere]\label{ex:schatten-sphere}
Fix an even integer $d\ge 2$. For $X\in\C^{n\times t}$, define the Schatten $d$-norm
\[
\|X\|_{d}:=\Big(\sum_{i=1}^n \sigma_i(X)^d\Big)^{1/d},
\]
where $\sigma_1(X),\dots,\sigma_n(X)$ are the singular values of $X$.
Consider the (Unitary-invariant) unit sphere
\[
\mathcal F_{n,t,d}:=\{X\in\C^{n\times t}:\ \|X\|_{d}=1\}.
\]
Since $d$ is even, $\mathcal F_{n,t,d}$ is a real algebraic set in $V^{\R}\cong\R^{2nt}$.

Its singular-value image is the affine Fermat hypersurface
\[
F_{n,d}:=\Big\{x\in\R^{n}:\ \sum_{i=1}^n x_i^{d}=1\Big\},
\]
which is absolutely symmetric. Hence $\mathcal F_{n,t,d}=\sigma^{-1}(F_{n,d})$.

Let $Y\in\C^{n\times t}$ be generic and write an SVD $Y=U\mathrm{diag}(y)V^*$ with $y\in\R^n$.
By Theorem~\ref{main}, the Hermitian distance critical points of $Y$ on $\mathcal F_{n,t,d}$
are precisely the matrices $U\mathrm{diag}(x)V^*$ where $x$ ranges over the real ED critical points of $y$
on $F_{n,d}$. In particular,
\[
\mathrm{HDdeg}(\mathcal F_{n,t,d})=\mathrm{\mathbb{R}EDdeg}(F_{n,d}).
\]

In the case $n=2$, write $F_{2,d}=\{(x_1,x_2)\in\R^2:\ x_1^d+x_2^d=1\}$.
A point $x\in F_{2,d}$ is a real ED critical point of $y=(y_1,y_2)$ if and only if
it lies on the curve $\gamma_{d,y}$.
\[
\gamma_{d,y}:=\Big\{x\in\R^2:\ x_1^{d-1}(x_2-y_2)=x_2^{d-1}(x_1-y_1)\Big\}.
\]
see \cite[Example 4.3]{drusvyatskiyLeeThomas2015}.

For $d=2$ one has $\#(F_{2,2}\cap\gamma_{2,y})=2$ for generic $y$ (the circle case).
For every even $d\ge 4$, there exists a nonempty open set of $y\in\R^2$ for which
$\#(F_{2,d}\cap\gamma_{d,y})=8$, hence $\mathrm{HDdeg}(\mathcal F_{2,t,d})=8$ in that generic chamber.
\end{example}

\begin{remark} \label{unit}

Consider the unitary group U(n) then the absolute symmetric set S is given by $(\pm 1,....,\pm 1)$, since each point is critical in the variety the ED degree of S is $2^n$. Therefore the HD degree of U(n) is the same. If $A$ a general matrix has an SVD given by $U\Sigma V$ then the corresponding critical points are obtained at $U\mathrm{diag}(x)V$ such that $x \in S$.\\ Let
$A = \begin{bmatrix}
i & 0 & 0 \\
0 & 2 & 0 \\
0 & 0 & 3
\end{bmatrix}$.
One of the SVD of A is given by
$A = U \Sigma V^*$,
where
$\Sigma = \mathrm{diag}(1,2,3)$, 
$U = \mathrm{diag}(i,1,1)$, 
and $V = I$. Then the critical points should admit a simultaneous decomposition and therefore should be $U\mathrm{diag}(\pm1,\pm1,\pm1)$. Note one of the critical points is U itself and from \ref{tsM}, $U\mathrm{diag}(i,-2i,3i)$ is an element of $T_UM$. Now if you compute $\mathrm{Tr}((A-U)(U\mathrm{diag}(i,-2i,3i))^*)$=-4i, so the imaginary part remains. This is due to the fact that the tangent space is real linear and the unitary group is a not a complex submanifold.
\end{remark}

\begin{remark}
    Notice if M is a complex sub manifold of $\mathbb{C}^{n \times t}$  which is invariant under unitary actions and $X=U\mathrm{diag}(x)V$ is a critical point for $Y=U\mathrm{diag}(y)V$ a data point, then in particular from (\ref{comp}), Lemma \ref{skew} and Lemma \ref{Complex}  we have $\mathrm{diag}(x-y)\mathrm{diag}(x)=0$. 
    
\end{remark}

Let $V=\C^{m\times n}$ with $m\le n$ and Hermitian inner product
$\langle A,B\rangle=\mathrm{Tr}(AB^*)$.
For $1\le r\le m$ let
\[
X_r:=\{A\in \C^{m\times n}:rank(A)\le r\}.
\]
Fix $U\in \C^{m\times n}$ and write a singular value decomposition
\[
U=T_1\Sigma T_2,\qquad \Sigma=\mathrm{diag}(\sigma_1,\dots,\sigma_m),\ \sigma_1\ge\cdots\ge\sigma_m\ge0,
\]
with $T_1\in U(m)$ and $T_2\in U(n)$.
For $1\le j\le m$ let $\Sigma_j$ denote the diagonal matrix with $\sigma_j$ in the $j$-th
diagonal position and zeros elsewhere.

Assume $U$ is generic (e.g.\ $\sigma_1>\cdots>\sigma_m$). Then the Hermitian critical points of the
squared distance function $d_U(A)=\|U-A\|^2$ on $X_r$ are precisely the matrices
\[
T_1(\Sigma_{i_1}+\cdots+\Sigma_{i_r})T_2,
\qquad 1\le i_1<\cdots<i_r\le m.
\]
Consequently, the Hermitian distance polynomial $HDpoly_{X,U}$ as in \cite{Furchi2025_HermitianDistanceDegree}, i.e.\ the Euclidean
distance polynomial of the real variety with respect to the real inner product
$\mathrm{Re}\langle\cdot,\cdot\rangle$ induced by the standard Hermitian form.
\[
HDpoly_{X_r,U}(t^2)
=
\prod_{1\le i_1<\cdots<i_r\le m}
\left(t^2-\sum_{j\notin\{i_1,\dots,i_r\}}\sigma_j(U)^2\right).
\]

In particular, for the corank one determinantal variety $X_{m-1}$ we obtain
\[
HDpoly_{X_{m-1},U}(t^2)
=
\prod_{k=1}^m (t^2-\sigma_k(U)^2)
=
\det\!\bigl(t^2I_m-UU^*\bigr).
\]
This is the Hermitian analogue of \cite[Theorem~8.1]{OS}

\newpage

\section{Slicing Theorem for Hermitian Distance Degree}
Section 3 gives an explicit description of Hermitian critical points and their critical values via the singular-value reduction. In this section we explain how the same reduction also follows abstractly from a slicing principle of Bik–Draisma. We state the result for the Hermitian distance degree, paralleling the
Euclidean distance degree result in \cite{bik2018eddegrees}.
\begin{theorem}\cite{bik2018eddegrees}[Bik-Draisma]\label{BD} Let V be a finite dimensional vector space over $\mathbb{C}$ equipped with a Euclidean product $<,>$. Let G be a complex algebraic group with an orthogonal representation O(V).
    Suppose that $V$ has a linear subspace $V_0$ such that, for sufficiently 
general $v_0 \in V_0$, the space $V$ is the orthogonal direct sum of $V_0$ 
and the tangent space $T_{v_0}(G \cdot v_0)$ to its $G$-orbit. 
Let $X$ be a $G$-stable closed subvariety of $V$. 
Set $X_0 := X \cap V_0$ and suppose that $G \cdot X_0$ is dense in $X$. 
Then the ED degree of $X$ in $V$ equals the ED degree of $X_0$ in $V_0$.

\end{theorem}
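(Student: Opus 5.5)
The statement is the Bik--Draisma slicing theorem, quoted from \cite{bik2018eddegrees}. It is used as a black box, but I would prove it by comparing critical points on $X$ and on $X_0$ for a general data point. Write $d_u(x)=\langle u-x,u-x\rangle$. Since orthogonality of $G$ means $\langle gv,gw\rangle=\langle v,w\rangle$, we have $d_{gu}(gx)=d_u(x)$. Hence $x$ is a critical point of $d_u$ on $X$ if and only if $gx$ is a critical point of $d_{gu}$, and it suffices to count critical points for data $u$ in a dense subset of $V$.

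\textbf{Reducing the data to the slice.} I would first show that $G\cdot V_0$ is dense in $V$. The differential of the action map $G\times V_0\to V$ at $(e,v_0)$ has image $T_{v_0}(G\cdot v_0)+V_0=V$, so the map is dominant. We may therefore assume $u=v_0\in V_0$ is sufficiently general, in particular $V=V_0\oplus T_{v_0}(Gv_0)$ orthogonally. Next, for every $v\in V_0$ and $\xi\in\mathfrak g$, differentiating $\langle gv,gv'\rangle=\langle v,v'\rangle$ gives
\[
\langle \xi v, v'\rangle=-\langle v,\xi v'\rangle .
\]

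\textbf{Critical points on the slice are critical on $X$.} Let $x\in X_0$ be a critical point of $d_u$ on $X_0$ with $u\in V_0$. Along a dense open part of $X_0$, $G\cdot X_0$ dense and a dimension count give
\[
T_xX=T_xX_0+T_x(Gx).
\]
The vector $u-x$ lies in $V_0$ and is orthogonal to $T_xX_0$ by assumption. For $\xi\in\mathfrak g$ we have $\langle u-x,\xi x\rangle=\langle u,\xi x\rangle$, because $\langle x,\xi x\rangle=0$. Skewness turns this into $-\langle \xi u,x\rangle$. Since $\xi u\in T_u(Gu)\perp V_0\ni x$, this vanishes. So $x$ is critical on $X$.

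\textbf{Critical points on $X$ lie on the slice.} Conversely, let $x\in X$ be critical for a general $u\in V_0$. Then $u-x\perp T_x(Gx)$, i.e. $\langle u-x,\xi x\rangle=0$ for all $\xi$. Equivalently $\langle u,\xi x\rangle=0$, i.e. $\langle \xi u,x\rangle=0$, so $x\perp T_u(Gu)$. By the orthogonal splitting at $u$ this forces $x\in V_0$, hence $x\in X_0$, and $x$ is then critical on $X_0$ because $T_xX_0\subset T_xX$. The resulting bijection of critical sets gives equality of ED degrees.

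\textbf{Main obstacle.} The difficult step is the genericity bookkeeping: ensuring that for general $u$ all critical points are smooth points of $X$ and of $X_0$ at which $T_xX=T_xX_0+T_x(Gx)$. I would handle this with an incidence-variety dimension argument analogous to Lemma~\ref{genericg}. A second subtlety is that $\langle\,,\rangle$ is complex bilinear, so isotropic vectors appear. This is why the Hermitian version will pass to $V^{\R}$ with $q$, where the form is positive definite.
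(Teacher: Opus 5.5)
Your proof is correct and is essentially the Bik--Draisma argument that the paper reproduces in its proof of the Hermitian slicing theorem. You use equivariance and density of $G\cdot V_0$ to move general data into $V_0$, then the splitting $T_xX=T_xX_0+T_x(G\cdot x)$ at general points of $X_0$, then the two inclusions of critical sets via skewness of $\mathfrak g$ and the orthogonal splitting at the data point, and you leave the genericity bookkeeping at the same level of detail as the paper. Your rewriting $\langle u-x,\xi x\rangle=-\langle \xi u,x\rangle$ is a small improvement, since it uses $V_0\perp T_u(G\cdot u)$ only at the general point $u$, rather than at the possibly non-general critical point $x$ as the paper does. If you get the tangent splitting from a dimension count rather than from generic smoothness of $G\times X_0\to X$, you should state that $T_xX_0\cap T_x(G\cdot x)=0$. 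This holds because $\mathfrak g x\perp V_0$ for every $x\in V_0$ (a condition linear in $x$ that holds generically, hence everywhere) and because the form is nondegenerate on $V_0$.
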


Let $V$ be a finite-dimensional complex vector space with Hermitian inner product
$\langle\cdot,\cdot\rangle$, and let
\[
q:=\mathrm{Re}\langle\cdot,\cdot\rangle
\]
be the associated real Euclidean inner product on the realification $V^{\R}$.
For a real algebraic subset $X\subseteq V^{\R}$ (in particular, for a complex variety viewed as a real set),
the critical points of the squared Hermitian distance function are exactly the \emph{real} Euclidean
distance critical points in the inner-product space $(V^{\R},q)$. Hence the relevant enumerative invariant
is a chamberwise \emph{real} critical count.

If $G\to U(V)$ is a unitary representation, then $G$ acts on $V^{\R}$ by $q$-orthogonal transformations.
Therefore the slicing method of Bik--Draisma \cite{bik2018eddegrees} applies in the real inner-product space
$(V^{\R},q)$; we include the argument for completeness, emphasizing that tangent spaces are generally only
\emph{real} linear in the Hermitian distance problem.
Throughout, $\mathrm{HDdeg}(X)$ denotes the number of real Euclidean distance critical points of a sufficiently
general data point in $V^{\R}$ with respect to $q$.

\begin{theorem}[Hermitian slicing Theorem]\label{dbrevised}
Let $V$ be a finite-dimensional complex vector space with Hermitian inner product $\langle\cdot,\cdot\rangle$,
and let $q=\mathrm{Re}\langle\cdot,\cdot\rangle$ be the associated real Euclidean inner product on $V^{\mathbb R}$.
Let $G\to U(V)$ be a unitary representation (hence an orthogonal representation on $(V^{\mathbb R},q)$).

Suppose that $V^{\mathbb{R}}$ has a real linear subspace $V_{0}^{\mathbb{R}}$ such that, for sufficiently
general $v_0 \in V_{0}^{\mathbb{R}}$, the space $V^{\mathbb{R}}$ is the $q$-orthogonal direct sum of
$V_{0}^{\mathbb{R}}$ and the tangent space $T_{v_0}(G \cdot v_0)$.
Let $X$ be a $G$-stable real subvariety of $V^{\mathbb{R}}$.
Set $X_0 := X \cap V_{0}^{\mathbb{R}}$ and suppose that $G \cdot X_0$ is dense in $X$.
Then $\mathrm{HDdeg}_{V^\mathbb{R}}(X)=\mathrm{\mathbb{R}EDdeg}_{V_0^{\R}}(X_0)$.
\end{theorem}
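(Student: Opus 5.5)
The plan is to run the Bik--Draisma argument verbatim in the real inner-product space $(V^{\mathbb R},q)$. The essential point is that every step there uses only linear algebra of the orthogonal decomposition and the orthogonality of the group action. Complex-linearity of tangent spaces is never needed, so the real-linear tangent spaces of the Hermitian problem cause no trouble. I would set up a bijection between the real critical points of a general datum $u\in V^{\mathbb R}$ on $X$ and the real critical points of a general $u_0\in V_0^{\mathbb R}$ on $X_0$, mirroring Theorem~\ref{main}. There the slice is the diagonal $\mathrm{diag}(\mathbb R^n)$ and $G=U(n)\times U(t)$.

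\textbf{Step 1 (moving data into the slice).} The hypothesis that $V^{\mathbb R}=V_0^{\mathbb R}\oplus T_{v_0}(G\cdot v_0)$ for general $v_0$ makes the map $G\times V_0^{\mathbb R}\to V^{\mathbb R}$, $(g,v_0)\mapsto gv_0$, submersive at $(e,v_0)$. Hence $G\cdot V_0^{\mathbb R}$ contains a nonempty open set. Since $G$ acts by $q$-isometries and preserves $X$, the critical points of $d_{gu_0}$ on $X$ are exactly $g$ applied to those of $d_{u_0}$. So it suffices to count for data $u=u_0\in V_0^{\mathbb R}$ general. The chamberwise value is constant on open sets, so nothing is lost by restricting to this open image. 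I would also use a genericity argument as in Lemma~\ref{genericg} (via Lemma~\ref{as}) to ensure that all critical points are smooth points where the relevant transversality holds.

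\textbf{Step 2 (critical points lie in the slice).} Let $x\in X^{\mathrm{reg}}$ be critical for $u_0$. Since $G\cdot x\subseteq X$, the orbit tangent $\mathfrak g\cdot x$ lies in $T_xX$, so $q(u_0-x,\xi x)=0$ for all $\xi\in\mathfrak g$. Orthogonality of the action makes each $\xi$ $q$-skew, so $q(\xi x,x)=0$ and hence $q(u_0,\xi x)=0$, that is, $q(\xi u_0,x)=0$. Thus $x\perp T_{u_0}(G\cdot u_0)$, and by the orthogonal decomposition at the general point $u_0$ we get $x\in V_0^{\mathbb R}$. This is the analogue of Theorem~\ref{sim} and Lemma~\ref{RD}. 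Then $x\in X_0$, and since $T_xX_0\subseteq T_xX$, the point $x$ is ED-critical for $u_0$ on $X_0$ inside $V_0^{\mathbb R}$.

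\textbf{Step 3 (converse), which I expect to be the main obstacle.} Let $x\in X_0$ be critical for $u_0$ in $V_0^{\mathbb R}$. I need $T_xX=T_xX_0+\mathfrak g\cdot x$ at such $x$. This should follow from density of $G\cdot X_0$ together with a Sard-type argument on $G\times X_0^{\mathrm{reg}}\to X$, as in Lemma~\ref{tsM}. It holds on an open dense good locus, and general $u_0$ avoids the bad locus by the dimension count of Lemma~\ref{genericg}, applied now in $V_0^{\mathbb R}$. Given this, $u_0-x\perp T_xX_0$ by assumption. Also $u_0-x\in V_0^{\mathbb R}$, and $\mathfrak g\cdot x=T_x(G\cdot x)\perp V_0^{\mathbb R}$ because $x$ is general in the slice (the computation of Step 2 run backwards, as in (\ref{comp})). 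So $x$ is critical on $X$.

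The two directions give a bijection of real critical sets, which yields $\mathrm{HDdeg}_{V^{\mathbb R}}(X)=\mathbb R\mathrm{EDdeg}_{V_0^{\mathbb R}}(X_0)$ chamberwise. The delicate points are making ``general'' precise for real semialgebraic data and checking that critical $x\in X_0$ can be taken general in $V_0^{\mathbb R}$. For the latter I would intersect with the open dense set where the orthogonal decomposition holds, and argue that the discriminant-type exceptional set has positive codimension.
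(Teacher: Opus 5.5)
Your route is the paper's: Bik--Draisma run in $(V^{\mathbb R},q)$. Step 2 is the paper's second lemma, and your Sard argument for $T_xX=T_xX_0+\mathfrak g\cdot x$ is Lemma~\ref{TSG}. Your dimension count in $V_0^{\mathbb R}$, which makes the ED critical points general in $X_0$, is a useful addition that the paper leaves implicit. Two of your justifications, however, would not go through as written.

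\textbf{Step 3.} You derive $\mathfrak g\cdot x\perp V_0^{\mathbb R}$ from $x$ being general in the slice, and you plan to arrange this by intersecting with the open dense set where the splitting holds. That fails, because $X_0$ is typically a proper subvariety of $V_0^{\mathbb R}$ and can lie entirely in the degenerate locus. Take the rank-$r$ variety of Theorem~\ref{gg} with $r<n$: every point of $X_0$ has a zero coordinate. The splitting needs nonzero entries with distinct moduli, so it fails everywhere on $X_0$, and your intersection is empty. What you actually need is only the orthogonality, and it holds at every $x\in V_0^{\mathbb R}$. For fixed $w\in V_0^{\mathbb R}$ and $\xi\in\mathfrak g$, the map $x\mapsto q(w,\xi x)$ is linear on $V_0^{\mathbb R}$ and vanishes on a dense set, hence vanishes identically. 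In the unitary case this is just (\ref{comp}), which is valid for all real $x$. The only genericity needed at $x$ is genericity inside $X_0$, and your dimension count supplies it.

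\textbf{Step 1.} An image of $(g,v_0)\mapsto gv_0$ with nonempty interior is not enough for a chamberwise invariant. A chamber of $V^{\mathbb R}\setminus\Delta_X$ that misses $G\cdot V_0^{\mathbb R}$ would be lost, so ``nothing is lost by restricting to this open image'' is false as stated. You need density, which is the paper's Lemma~\ref{dense}. Here density follows from compactness:
\begin{enumerate}
\item For $w\in V^{\mathbb R}$ and general $v_0\in V_0^{\mathbb R}$, minimize $g\mapsto q(gw-v_0,gw-v_0)$ over the closure $\bar G$ of the image of $G$ in $U(V)$.
\item At a minimizer, $0=q(gw-v_0,\xi gw)=q(\xi v_0,gw)$ for all $\xi\in\mathfrak g$.
\item Hence $gw\in (T_{v_0}(G\cdot v_0))^{\perp_q}=V_0^{\mathbb R}$, so $\bar G\cdot V_0^{\mathbb R}=V^{\mathbb R}$ and $G\cdot V_0^{\mathbb R}$ is dense.
\end{enumerate}
For the reverse inclusion of chamber values, every chamber of $V_0^{\mathbb R}$ must contain data that are also generic for $X$. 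That is, $\Delta_X\cap V_0^{\mathbb R}$ must be nowhere dense in $V_0^{\mathbb R}$. This follows from $G$-invariance of $\Delta_X$ together with the submersivity you already noted.

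Finally, in Step 2 you should check that $x\in X_0^{\mathrm{reg}}$ before using $T_xX_0$.
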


\begin{lemma}\cite{bik2018eddegrees}(Lemma 16)
Critical points are equivariant: if $x$ is a critical point for $u$, then $g\cdot x$ is a critical point for $g\cdot u$.\label{hdcrit}
\end{lemma}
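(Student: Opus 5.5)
The statement to prove is Lemma~\ref{hdcrit}: if $x$ is a critical point of the distance function for the data point $u$, then $g\cdot x$ is a critical point for $g\cdot u$. The plan is to work entirely in the real inner-product space $(V^{\R},q)$ and use two facts. First, each $g\in G$ acts on $V^{\R}$ as a $q$-orthogonal linear map. Second, $X$ is $G$-stable, so $g$ restricts to a real-algebraic automorphism of $X$.

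The first step is to check that $g$ preserves $q$. Since $G\to U(V)$ is unitary, $\langle gv,gw\rangle=\langle v,w\rangle$ for all $v,w$, and taking real parts gives $q(gv,gw)=q(v,w)$.

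The second step concerns the smooth locus and tangent spaces. The map $x\mapsto g\cdot x$ is an invertible real-linear map of $V^{\R}$ that sends $X$ to itself, with inverse $g^{-1}$. It is therefore a real-algebraic isomorphism of $X$ onto itself. Consequently it maps $X^{\mathrm{reg}}$ onto $X^{\mathrm{reg}}$, and its differential, which is $g$ itself because the map is linear, satisfies $T_{g x}X=g\,T_xX$. To justify this I would take a smooth curve $\gamma$ in $X$ through $x$; then $g\gamma$ is a smooth curve in $X$ through $gx$ with velocity $g\gamma'(0)$. Applying the same argument to $g^{-1}$ gives the reverse inclusion, so the two tangent spaces correspond exactly.

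The third step is the critical condition itself. Suppose $x$ is critical for $u$, meaning $q(u-x,a)=0$ for all $a\in T_xX$. Any $b\in T_{gx}X$ can be written as $b=ga$ with $a\in T_xX$, and then
\[
q(gu-gx,\,ga)=q(g(u-x),\,ga)=q(u-x,a)=0 .
\]
So $gu-gx$ is $q$-orthogonal to $T_{gx}X$, which means $gx$ is critical for $gu$. Equivalently, the squared distance satisfies $d_{gu}(gy)=\|gy-gu\|^2=\|y-u\|^2=d_u(y)$, so $d_{gu}\circ g=d_u$ on $X$, and a diffeomorphism carries critical points to critical points.

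The only delicate point is the second step. We need $g$ to preserve $X^{\mathrm{reg}}$ and to transport the tangent space exactly. In the Hermitian setting $T_xX$ is only a real subspace, but this causes no difficulty: the whole argument uses real-linearity of $g$ and never uses complex-linearity of the tangent space.
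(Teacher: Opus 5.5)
Your proof is correct and follows essentially the same argument as the source: the paper gives no proof of its own and only cites Bik--Draisma (Lemma~16). That proof observes that $g$ is an orthogonal linear map preserving $X$, hence preserves $X^{\mathrm{reg}}$ with $T_{gx}X=g\,T_xX$, so $u-x\perp_q T_xX$ transports to $gu-gx\perp_q T_{gx}X$; your point that only real-linearity of $g$ is used is what lets this carry over to $(V^{\R},q)$.
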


\begin{lemma}\cite{bik2018eddegrees}(Lemma 15)
    The set $GV_0$ is dense in $V$\label{dense}
\end{lemma}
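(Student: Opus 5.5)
The plan is to read ``dense'' in the sense needed for the slicing argument, namely that $G\cdot V_0^{\mathbb R}$ contains a nonempty Euclidean open subset of $V^{\mathbb R}$ and is therefore Zariski dense, so that a sufficiently general data point $u\in V^{\mathbb R}$ can be written as $u=g\cdot u_0$ with $u_0\in V_0^{\mathbb R}$ general. The tool is the action map
\[
\mu: G\times V_0^{\mathbb R}\longrightarrow V^{\mathbb R},\qquad (g,v)\longmapsto g\cdot v ,
\]
together with the transversality hypothesis of Theorem~\ref{dbrevised}.

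\textbf{Step 1 (the differential).} First I will compute $d\mu$ at a point $(e,v_0)$ with $v_0\in V_0^{\mathbb R}$ sufficiently general. Writing $\mathfrak g$ for the (real) Lie algebra of $G$, the differential is
\[
d\mu_{(e,v_0)}(\xi,w)=\xi\cdot v_0+w .
\]
Its image is therefore $T_{v_0}(G\cdot v_0)+V_0^{\mathbb R}$. By hypothesis this sum is all of $V^{\mathbb R}$ (it is even a $q$-orthogonal direct sum), so $d\mu_{(e,v_0)}$ is surjective.

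\textbf{Step 2 (equivariance).} Next I will use $\mu(hg,v)=h\cdot\mu(g,v)$, which gives
\[
d\mu_{(h,v_0)}=h\circ d\mu_{(e,v_0)}\circ(\text{left translation})^{-1}.
\]
Hence $\mu$ is a submersion at every point of $G\times\{v_0\}$. The set of $v_0$ for which surjectivity holds is defined by a rank condition, which is polynomial in $v_0$. It is nonempty by hypothesis, so it is a Zariski open dense subset $V_0^{\circ}$ of $V_0^{\mathbb R}$.

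\textbf{Step 3 (open image, then density).} By the submersion (implicit function) theorem, $\mu(G\times V_0^{\circ})$ is a nonempty Euclidean open subset of $V^{\mathbb R}$. The full image $G\cdot V_0^{\mathbb R}$ is semialgebraic, being the image of a semialgebraic set under a polynomial map. It contains an open set, so it has full dimension and its Zariski closure is all of $V^{\mathbb R}$. This is exactly the genericity needed later, when one writes a general $u$ as $g\cdot u_0$ and combines Lemma~\ref{hdcrit} with the fact that the excluded discriminant loci are semialgebraic of positive codimension.

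\textbf{Main obstacle.} The delicate point is the gap between Zariski density and Euclidean density in the real setting. For a general compact $G$, the image of $\mu$ could a priori be a proper full-dimensional semialgebraic region rather than everything. I will therefore state the lemma in the Zariski sense, which is what is used in the proof. In the case of interest, $G=U(n)\times U(t)$ with $V_0^{\mathbb R}$ the real diagonal matrices, the SVD theorem quoted in the Preliminaries gives surjectivity of $\mu$ outright, so $G\cdot V_0^{\mathbb R}=V^{\mathbb R}$ and Euclidean density holds trivially.
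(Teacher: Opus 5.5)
Your Steps 1--3 are correct and reproduce the argument behind the cited Bik--Draisma Lemma~15, which the paper quotes without proof. The differential $(\xi,w)\mapsto\xi\cdot v_0+w$ of the action map is onto by the splitting hypothesis, so the action map is dominant, and over $\C$ that is all that is needed. The gap is that you settle for Zariski density in the real setting and claim this is what the slicing argument uses. It is not. In $V^{\R}$ a Zariski dense semialgebraic set can omit whole open regions; think of $\{x\ge 0\}\subset\R$. Since HD counts are only chamberwise constant, writing a sufficiently general $u\in V^{\R}$ as $g\cdot u_0$ with $u_0\in V_0^{\R}$ general requires $G\cdot V_0^{\R}$ to contain an open \emph{dense} subset of $V^{\R}$. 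Otherwise chambers of $V^{\R}$ that do not meet $G\cdot V_0^{\R}$ are invisible from the slice, and the inclusion $\mathrm{HDdeg}(X)\subseteq\mathrm{\mathbb{R}EDdeg}(X_0)$ does not follow. The positive codimension of the discriminants does not help here. The paper itself reads ``dense'' in the Euclidean sense, e.g.\ in the proof of Lemma~\ref{TSG}. So your opening claim, that containing a nonempty open set lets a sufficiently general $u$ be written as $g\cdot u_0$, is false as stated.

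The missing idea is that for compact $G$ the hypotheses force $G\cdot V_0^{\R}=V^{\R}$. This covers any closed subgroup of $U(V)$, in particular $U(n)\times U(t)$. So the proper full-dimensional image you worry about cannot occur. Fix $v_0\in V_0^{\R}$ at which the $q$-orthogonal splitting holds, take any $u\in V^{\R}$, and pick $g\in G$ maximizing $q(g\cdot u,v_0)$; equivalently, minimize $q(g\cdot u-v_0,\,g\cdot u-v_0)$ over the compact group. The first-order condition is $q(\xi\cdot(g\cdot u),v_0)=0$ for all $\xi\in\mathfrak g$. Since $\xi$ acts $q$-skew-symmetrically, this says $q(g\cdot u,\xi\cdot v_0)=0$, i.e.\ $g\cdot u\in\bigl(T_{v_0}(G\cdot v_0)\bigr)^{\perp_q}=V_0^{\R}$. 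This is the same mechanism the paper uses to show that HD critical points for $v_0$ lie in $V_0^{\R}$. It gives Euclidean density, indeed equality, in general, and your SVD observation becomes the special case $G=U(n)\times U(t)$. The differential argument alone cannot deliver this.
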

\begin{lemma}\cite{bik2018eddegrees}(Lemma 17)
   A sufficiently general $x_0 \in X_0$  lies in $X^{reg}$ and $X_0^{reg}$
\end{lemma}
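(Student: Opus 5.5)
The plan is to pass to the real setting $(V^{\R},q)$ of Theorem~\ref{dbrevised} and prove the statement with the local slice argument of Bik--Draisma. Let $A\colon G\times V_0^{\R}\to V^{\R}$ be the action map $(g,v)\mapsto g\cdot v$. Its differential at $(e,v_0)$ is
\[
(\xi,w)\longmapsto \xi\cdot v_0 + w ,
\]
so its image is $T_{v_0}(G\cdot v_0)+V_0^{\R}$. By hypothesis this is all of $V^{\R}$ for $v_0$ outside a proper semialgebraic (Zariski closed) subset $B\subsetneq V_0^{\R}$. For $v_0\notin B$, $A$ is therefore a submersion near $(e,v_0)$.

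\textbf{Local product structure.} Fix $v_0\in V_0^{\R}\setminus B$. Choose a submanifold $G'\subseteq G$ through $e$ whose tangent space at $e$ is a complement to the kernel of $\xi\mapsto \xi\cdot v_0$. The orthogonal direct-sum hypothesis then says that
\[
A|_{G'\times V_0^{\R}}\colon G'\times V_0^{\R}\longrightarrow V^{\R}
\]
has bijective differential at $(e,v_0)$. By the inverse function theorem it is a local diffeomorphism onto a neighbourhood $N$ of $v_0$. Because $X$ is $G$-stable, the preimage of $X\cap N$ is exactly $G'\times X_0$ near $(e,v_0)$:
\begin{enumerate}
\item if $g\cdot v\in X$, then $v=g^{-1}\cdot(g\cdot v)\in X\cap V_0^{\R}=X_0$;
\item conversely, if $v\in X_0$, then $g\cdot v\in X$.
\end{enumerate}
Hence, near $v_0$, $X$ is diffeomorphic to the product of a manifold with $X_0$. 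So for $x_0\in X_0\setminus B$ we get that $x_0\in X^{\mathrm{reg}}$ if and only if $x_0\in X_0^{\mathrm{reg}}$, with $\dim_{x_0}X=\dim G' +\dim_{x_0}X_0$.

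\textbf{Genericity.} Since $X_0^{\mathrm{reg}}$ is open dense in $X_0$, it remains to show that $X_0\setminus B$ is dense in $X_0$, at least on the components that matter. This is the step I expect to be the main obstacle, since a priori some component of $X_0$ could lie inside $B$.

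I would handle it with the hypothesis that $G\cdot X_0$ is dense in $X$, together with a dimension count. Suppose a component $C\subseteq X_0$ lies in $B$. At points of $B$ the orbit directions and $V_0^{\R}$ fail to span $V^{\R}$, so the image of $A|_{G\times C}$ has dimension strictly smaller than the image of $G\times C'$ for a component $C'$ of $X_0$ meeting $V_0^{\R}\setminus B$. In particular $G\cdot C$ cannot fill a component of $X$. I would make this precise using Lemma~\ref{as} applied to $A$ restricted to $G\times C^{\mathrm{reg}}$: its image has dimension less than $\dim X$. Such components are then discarded, since they contribute only to a lower-dimensional part of $X$.

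I would also use Lemma~\ref{dense} ($G V_0$ is dense in $V$) to check that the bad set $B$ is a proper subset, and to make the choice of $v_0$ compatible with the general data point needed later. With $X_0$ replaced by the union of its components not contained in $B$, a general $x_0\in X_0$ lies in $X_0^{\mathrm{reg}}\setminus B$. By the local product structure above it then also lies in $X^{\mathrm{reg}}$.
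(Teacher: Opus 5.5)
The paper does not prove this lemma; it quotes it from Bik--Draisma. So your argument has to stand on its own, and it has a genuine gap in the genericity step.

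Your local product structure is correct, but only at points of $X_0$ outside the bad locus $B$. Your reason for discarding components $C\subseteq B$ is a non sequitur. You argue that because $V_0^{\R}+T_{x_0}(G\cdot x_0)\neq V^{\R}$ on $B$, the set $G\cdot C$ must have dimension $<\dim X$. But $\dim G\cdot C$ is governed by whether $T_{x_0}C+T_{x_0}(G\cdot x_0)$ fills $T_{x_0}X$, not $V^{\R}$.

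The claim is also false in the paper's own examples, where $X_0$ lies entirely in $B$:
\begin{enumerate}
\item For $M_r$ with $r<n$, every point of $X_0$ is $\mathrm{diag}(x)$ with some $x_i=0$. At $x_0=\mathrm{diag}(x_1,\dots,x_r,0,\dots,0)$ with nonzero $|x_i|$ pairwise distinct, one checks directly that $T_{x_0}X_0+T_{x_0}(G\cdot x_0)=T_{x_0}M_r$. This is the space of matrices with vanishing lower-right $(n-r)\times(t-r)$ block, far from $V^{\R}$.
\item For $U(n)$ with $n\ge 2$, $X_0=\{\mathrm{diag}(\pm1,\dots,\pm1)\}$, where all $|x_i|$ are equal.
\end{enumerate}
Yet $G\cdot X_0=X$ in both cases; already $G\cdot\{I_n\}=U(n)$. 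So after your discarding step nothing is left, precisely in the Eckart--Young situation the paper cares about. Replacing $X_0$ by some of its components would change the statement anyway. Your use of Lemma~\ref{dense} is also unnecessary, since properness of $B$ is the hypothesis.

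The missing idea is to work with the action map $m\colon G\times X_0\to X$ rather than $G\times V_0^{\R}\to V^{\R}$, and to use that $X^{\mathrm{sing}}$ is closed and $G$-stable.
\begin{itemize}
\item If a component $C$ of $X_0$ were contained in $X^{\mathrm{sing}}$, then so would $G\cdot C$. Hence it suffices to know that each $G\cdot C$ is dense in a component of $X$.
\item When $X_0$ is irreducible, this is immediate from density of $G\cdot X_0$.
\item In the matrix case it follows from $G\cdot\mathrm{diag}(x)\cap V_0^{\R}=\{\mathrm{diag}(\pi x):\pi\in\Pi_n^{\pm}\}$, which prevents one set $G\cdot C$ from lying inside another.
\item Then $C\cap X^{\mathrm{reg}}$ is dense in $C$, and intersecting with the dense set $X_0^{\mathrm{reg}}$ gives the lemma.
\end{itemize}
The tangent decomposition used afterwards should likewise come from $m$, as in Lemma~\ref{TSG}: apply Lemma~\ref{as} to $m$, and use equivariance, namely that $dm_{(g,x_0)}$ is onto iff $dm_{(e,x_0)}$ is. It should not come from the slice condition at $x_0$, which fails on $B$.
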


\begin{lemma}\label{TSG}
A sufficiently general $x_{0} \in X_{0}$ satisfies
\[
T_{x_{0}}X \;=\; T_{x_{0}}X_{0} \;+\; T_{x_{0}}(G\cdot x_{0}).
\]
\end{lemma}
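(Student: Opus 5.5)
The plan is to follow the Bik--Draisma argument, working entirely in the real inner-product space $(V^{\R},q)$, and to express $T_{x_0}X$ as the image of a differential. Consider the semialgebraic smooth map
\[
\mu: G\times X_0^{\mathrm{reg}}\longrightarrow X,\qquad (g,x)\longmapsto g\cdot x .
\]
By hypothesis $G\cdot X_0$ is dense in $X$, so the image of $\mu$ is dense in $X$. At a point $(e,x_0)$ its differential is
\[
d\mu_{(e,x_0)}(\xi,a)=\xi\cdot x_0+a,
\]
where $\xi$ ranges over the (real) Lie algebra of $G$ and $a\in T_{x_0}X_0$. Hence $\operatorname{im} d\mu_{(e,x_0)}=T_{x_0}(G\cdot x_0)+T_{x_0}X_0$. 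For $x_0$ general, Lemma 17 puts $x_0$ in $X^{\mathrm{reg}}\cap X_0^{\mathrm{reg}}$, so this image is contained in $T_{x_0}X$. This gives the inclusion ``$\supseteq$'' immediately.

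For the reverse inclusion I will argue as in the proof of Lemma~\ref{tsM}. By Lemma~\ref{as} (semialgebraic Sard), the critical values of $\mu$ into $X^{\mathrm{reg}}$ form a semialgebraic set of dimension $<\dim X$. Since the image of $\mu$ is dense and semialgebraic, it contains an open dense subset of $X^{\mathrm{reg}}$. At regular values $y=g\cdot x$ in this image, $d\mu_{(g,x)}$ is surjective onto $T_yX$. By equivariance, $d\mu_{(g,x)}=g\circ d\mu_{(e,x)}\circ(\text{translation})$, since $\mu(gh,x)=g\,\mu(h,x)$. Therefore surjectivity at $(g,x)$ is equivalent to surjectivity at $(e,x)$, which gives $T_xX=T_xX_0+T_x(G\cdot x)$, using $T_{gx}X=g\,T_xX$ because $X$ is $G$-stable.

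The main obstacle is ensuring that ``general $x_0\in X_0$'' actually lands in the good set. The set of $x\in X_0^{\mathrm{reg}}$ with $\mu(e,x)$ a critical value is $G$-invariant in the sense above: if $x$ is bad, then the whole orbit $G\cdot x$ consists of critical values. So I must show that the bad set $B_0\subset X_0$ is nowhere dense in $X_0$. I would argue by dimension, as follows. Suppose $B_0$ contained a nonempty open subset of $X_0$. Then $G\cdot B_0$ would be contained in the critical value set $C$. Using the transversality hypothesis $V^{\R}=V_0^{\R}\oplus T_{v_0}(G v_0)$, the map $G\times V_0^{\R}\to V^{\R}$ is submersive near general $v_0$ (this is Lemma~\ref{dense}). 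This submersivity should give $\dim G\cdot U\ge \dim X$ for an open $U\subset X_0$, because the orbit directions and the $X_0$ directions are complementary. That contradicts $\dim C<\dim X$. If this dimension count is awkward, I would try instead to exhibit one point of $X_0$ where $d\mu$ is surjective and use semialgebraicity and openness of the surjectivity condition on each component of $X_0^{\mathrm{reg}}$ whose $G$-sweep is dense.
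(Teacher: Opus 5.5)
Your second paragraph is the paper's proof. The paper restricts the action map to smooth loci, applies Lemma~\ref{as}, and picks a regular value $x=g\cdot x_0\in G\cdot X_0^{\mathrm{reg}}$. It then computes $\operatorname{im} dm_{(g,x_0)}=T_x(G\cdot x)+g\cdot T_{x_0}X_0$ and translates by $g^{-1}$. After that it simply asserts, with no argument, that ``the set of such $x_0$ is dense in $X_0$''. You are right that this is the real content of the lemma. Typically $\dim X_0<\dim X$, so Sard on $X$ alone cannot stop $X_0$ from lying inside the critical-value set $C$. However, the argument you propose for this step has a gap.

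\textbf{The failing step.} The step that fails is ``$\dim G\cdot U\ge\dim X$''. You derive it from submersivity of $G\times V_0^{\R}\to V^{\R}$, which holds only where $V^{\R}=V_0^{\R}\oplus T_{v_0}(G\cdot v_0)$, i.e.\ on the generic open set $V_0^{\circ}\subseteq V_0^{\R}$. Nothing puts any point of $U$, or even of $X_0$, in $V_0^{\circ}$, and in the paper's own examples none is:
\begin{itemize}
\item for $M_r$ with $r<n$, every point of $X_0$ has a zero coordinate;
\item for $U(n)$ with $n\ge 2$, all $|x_i|$ are equal.
\end{itemize}
In both cases $V_0^{\R}+T_{x_0}(G\cdot x_0)\subsetneq V^{\R}$ at every $x_0\in X_0$. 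Where $X_0$ does meet $V_0^{\circ}$, Sard is unnecessary: $G\times V_0^{\circ}\to V^{\R}$ is a submersion and the preimage of $X$ is $G\times(X_0\cap V_0^{\circ})$. Tangent spaces then pull back, and the identity holds at every smooth point of $X_0\cap V_0^{\circ}$.

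\textbf{Your fallback.} This is the better route, but it is incomplete as stated. On a connected component $K$ of $X_0^{\mathrm{reg}}\cap X^{\mathrm{reg}}$, the bad locus is locally cut out by real-analytic minors of $d\mu_{(e,x)}$. Hence it is either all of $K$ or nowhere dense in $K$. It is nowhere dense exactly when $G\cdot K$ has nonempty interior in $X$: one direction is Lemma~\ref{as} applied to $G\times K\to X$, the other is the submersion theorem. Density of $G\cdot X_0$ guarantees nonempty interior only for some components. You still need to show that every such $K$ sweeps out an open subset of $X$.

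The transversality hypothesis does not supply this, since it says nothing about points outside $V_0^{\circ}$. In the unitary setting it can be obtained from a local slice argument at $x_0$. There the isotropy group $G_{x_0}$ acts on the $q$-normal space $(T_{x_0}(G\cdot x_0))^{\perp_q}\supseteq V_0^{\R}$ with every orbit meeting $V_0^{\R}$. Some such extra input is required to close the proof.
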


\begin{proof}
Consider the action map (on real loci)
\[
m : G \times X_{0} \longrightarrow X, \qquad (g,x)\longmapsto g\cdot x .
\]
By assumption $G\cdot X_0$ is dense in $X$, so the image of $m$ is Euclidean dense in $X$.
Restrict $m$ to the smooth loci:
\[
m^{\mathrm{reg}} : G \times X_{0}^{\mathrm{reg}} \longrightarrow X^{\mathrm{reg}},
\]
which is a semialgebraic $C^\infty$ map between semialgebraic smooth manifolds .

Apply Lemma~\ref{as} to $m^{\mathrm{reg}}$. The set of critical values in $X^{\mathrm{reg}}$ is semialgebraic
of dimension $<\dim X^{\mathrm{reg}}=\dim X$. Hence the set of regular values is dense in $X^{\mathrm{reg}}$.
Pick a point $x\in X^{\mathrm{reg}}$ that is a regular value of $m^{\mathrm{reg}}$ and lies in the dense set
$G\cdot X_{0}^{\mathrm{reg}}$. Choose $(g,x_0)\in G\times X_{0}^{\mathrm{reg}}$ with $g\cdot x_0=x$.
Since $x$ is a regular value, the differential $dm_{(g,x_0)}$ is surjective onto $T_xX$.

For $\xi\in T_gG$ and $v\in T_{x_0}X_0$,
\[
dm_{(g,x_0)}(\xi,v)=\xi\cdot x \;+\; g\cdot v,
\]
so
\[
\mathrm{Im}(dm_{(g,x_0)}) = T_x(G\cdot x) \;+\; g\cdot T_{x_0}X_0.
\]
Surjectivity gives
\[
T_xX = T_x(G\cdot x) \;+\; g\cdot T_{x_0}X_0.
\]
Apply $g^{-1}$ and use $g^{-1}\cdot T_xX=T_{x_0}X$ and
$g^{-1}\cdot T_x(G\cdot x)=T_{x_0}(G\cdot x_0)$ to obtain
\[
T_{x_0}X = T_{x_0}(G\cdot x_0) \;+\; T_{x_0}X_0.
\]
Finally, the set of such $x_0$ is dense in $X_0$,
so the equality holds for sufficiently general $x_0\in X_0$.
\end{proof}

\begin{lemma}
    Let $v_0 \in V_0^\mathbb{R}$ then any $\mathrm{\mathbb{R}ED}$ critical point on $X_0$ for $v_0$ is an HD critical point on X for $v_0$
\end{lemma}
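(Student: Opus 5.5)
The plan is to check that, at a sufficiently general critical point $x_0\in X_0$ of $v_0$, the residual $v_0-x_0$ is $q$-orthogonal to all of $T_{x_0}X$. By Lemma~\ref{TSG} this tangent space splits as $T_{x_0}X=T_{x_0}X_0+T_{x_0}(G\cdot x_0)$, so it suffices to check orthogonality against each summand separately. This mirrors the argument of Bik--Draisma, with $q$ replacing the complex bilinear form.

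For the first summand we use only the hypothesis. Since $x_0$ is an $\mathbb{R}$ED critical point of $v_0$ on $X_0\subseteq V_0^{\R}$, the vector $v_0-x_0$ is $q$-orthogonal to $T_{x_0}X_0$. By the lemma above (Lemma 17 of \cite{bik2018eddegrees}), a general $x_0$ lies in both $X^{\mathrm{reg}}$ and $X_0^{\mathrm{reg}}$, so both tangent spaces make sense.

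For the orbit summand we use that both $v_0$ and $x_0$ lie in $V_0^{\R}$, so $v_0-x_0\in V_0^{\R}$. The hypothesis of Theorem~\ref{dbrevised}, applied at the general point $x_0\in V_0^{\R}$, says $V^{\R}=V_0^{\R}\oplus T_{x_0}(G\cdot x_0)$ is a $q$-orthogonal decomposition. Hence $V_0^{\R}\perp_q T_{x_0}(G\cdot x_0)$, and in particular $q(v_0-x_0,\xi\cdot x_0)=0$ for all $\xi$ in the Lie algebra of $G$. Unitarity of the representation is what makes this compatible with $q$: the infinitesimal action is $q$-skew, so tangent spaces of orbits are genuinely real subspaces orthogonal in the $q$ sense. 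We need only the real statement, not a complex-linear one; Remark~\ref{unit} warns that the complex-linear version fails. Adding the two summands gives $v_0-x_0\perp_q T_{x_0}X$, so $x_0$ is an HD critical point of $v_0$ on $X$.

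The main obstacle is genericity. The orthogonal decomposition is assumed only for sufficiently general points of $V_0^{\R}$, Lemma~\ref{TSG} holds only for general $x_0\in X_0$, and the critical point $x_0$ is determined by $v_0$. To handle this I would show that for $v_0$ in a dense open semialgebraic subset of $V_0^{\R}$, every $\mathbb{R}$ED critical point of $v_0$ on $X_0$ avoids the bad locus $B\subset X_0$ where either condition fails. This is a dimension count on the incidence variety $\{(x,v): x\in B,\ v-x\perp_q T_xX_0\}$, exactly as in Lemma~\ref{genericg}. Since $\dim B<\dim X_0$, the projection of this incidence variety has dimension $<\dim V_0^{\R}$. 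The statement should therefore be read, as in the paper's conventions, for sufficiently general $v_0\in V_0^{\R}$.
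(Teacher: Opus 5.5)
Your outline is the paper's own: split $T_{x_0}X$ by Lemma~\ref{TSG}, get $v_0-x_0\perp_q T_{x_0}X_0$ from criticality on $X_0$, and get orthogonality to the orbit summand from $v_0-x_0\in V_0^{\R}$. The gap is in how you justify that last orthogonality. You apply the splitting hypothesis of Theorem~\ref{dbrevised} ``at the general point $x_0\in V_0^{\R}$''. But $x_0$ is only general in $X_0$, and $X_0$ can lie entirely inside the locus of $V_0^{\R}$ where the splitting fails. This already happens in the paper's examples for the left--right action:
\begin{itemize}
\item For $U(n)$ with $n\ge 2$, $X_0=\{\mathrm{diag}(\pm1,\dots,\pm1)\}$. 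At $x_0=I$ the orbit tangent space is $\mathfrak{u}(n)$, so $V_0^{\R}+T_{x_0}(G\cdot x_0)$ has real dimension $n^2+n<2n^2$.
\item For $M_r$ with $r<n$, every $x_0\in X_0$ has some entry $x_{0,i}=0$. Then every orbit tangent vector has vanishing $(i,i)$ entry, so $iE_{ii}$ ($E_{ii}$ the matrix unit) is not in $V_0^{\R}+T_{x_0}(G\cdot x_0)$.
\end{itemize}
In both cases your bad locus $B$ is all of $X_0$. So $\dim B<\dim X_0$ is false, and the incidence-variety count cannot rescue the step.

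The missing observation is that you never need the direct-sum decomposition at $x_0$. You only need $V_0^{\R}\perp_q T_{x_0}(G\cdot x_0)=\mathfrak g\cdot x_0$, and this is a closed condition in $x_0$. For fixed $w\in V_0^{\R}$ and $\xi\in\mathfrak g$, the map $x\mapsto q(w,\xi\cdot x)$ is a real linear functional on $V_0^{\R}$. It vanishes on the dense set of sufficiently general points, hence vanishes identically. So $V_0^{\R}\perp_q T_x(G\cdot x)$ holds for \emph{every} $x\in V_0^{\R}$.

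For the left--right action this is just the first half of the verification of \eqref{eq:slice-splitting}. That computation uses only that $y$ is real, not that its entries are nonzero with distinct moduli. This is what the paper's one-line proof tacitly uses. With it, genericity of $x_0$ is needed only for Lemma~\ref{TSG} and for $x_0\in X^{\mathrm{reg}}\cap X_0^{\mathrm{reg}}$. That is where an avoidance argument like yours belongs.
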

\begin{proof}
    Let $x_0$ be a real ED critical point for $v_0$ in $X_0$ then $v_0 -x_0 \perp T_{x_0}X_0$ and since $v_0-x_0\in V_0$ which is orthogonal to $T_{x_0}Gx_0$ therefore by Lemma \ref{TSG} we have $v_0-x_0\perp T_{x_0}X$
\end{proof}
\begin{lemma}
    Let $v_{0} \in V_{0}$ be sufficiently general. Then any  HD critical point on $X$ for $v_{0}$ is
an $\mathrm{\mathbb{R}ED}$ critical point on $X_{0}$ for $v_{0}$.
\end{lemma}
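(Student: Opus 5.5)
The plan is to follow the argument of Bik--Draisma (their Lemma 18 and its proof), transplanted to the real inner-product space $(V^{\mathbb R},q)$. Let $x\in X^{\mathrm{reg}}$ be an HD critical point for $v_0$, so that $v_0-x\perp_q T_xX$. The goal has two parts. First, show that $x$ lies in $V_0^{\mathbb R}$ (hence in $X_0$). Second, show that $v_0-x\perp T_xX_0$. The second part is immediate once the first is known: $T_xX_0\subseteq T_xX$, and $x$ is a smooth point of $X_0$ for general data, by Lemma 17 of \cite{bik2018eddegrees} together with the genericity argument below. So the whole content is the first part.

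For the first part I will use the orbit directions. Since $X$ is $G$-stable, $T_x(G\cdot x)\subseteq T_xX$, and therefore $v_0-x\perp_q T_x(G\cdot x)$. Take $\xi$ in the (real) Lie algebra $\mathfrak g$. Because $G$ acts $q$-orthogonally, $\xi$ acts by a $q$-skew operator. Hence
\[
0=q(v_0-x,\xi x)=q(v_0,\xi x)=-q(\xi v_0,x),
\]
using $q(x,\xi x)=0$. So $x\perp_q T_{v_0}(G\cdot v_0)$. Since $v_0$ is general in $V_0^{\mathbb R}$, the hypothesis gives
\[
V^{\mathbb R}=V_0^{\mathbb R}\oplus^{\perp_q}T_{v_0}(G\cdot v_0),
\]
and so $(T_{v_0}(G\cdot v_0))^{\perp_q}=V_0^{\mathbb R}$. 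Therefore $x\in V_0^{\mathbb R}$, and hence $x\in X\cap V_0^{\mathbb R}=X_0$. This is the computation $q(v_0,\xi x)=-q(\xi v_0,x)$ from the Euclidean case, and it uses only that the representation is orthogonal for $q$, which holds because the unitary representation preserves $\mathrm{Re}\langle\cdot,\cdot\rangle$.

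The remaining issue, which I expect to be the main obstacle, is genericity. We need $x$ to be a smooth point of both $X$ and $X_0$, and the tangent spaces to behave as in Lemma~\ref{TSG}. The data point $v_0$ ranges only over $V_0^{\mathbb R}$, so we cannot simply invoke a general $v\in V^{\mathbb R}$. I will argue by dimension count, as in Lemma~\ref{genericg}. Let $B\subset X_0$ be the semialgebraic set of dimension $<\dim X_0$ where Lemma 17 or Lemma~\ref{TSG} fails. Then consider the incidence set of pairs $(x,v_0)\in B\times V_0^{\mathbb R}$ with $v_0-x\perp T_xX$. We have $T_xX\supseteq T_xX_0+T_x(G\cdot x)$, and by the first part $v_0-x$ lies in $V_0^{\mathbb R}$. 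So $v_0-x$ lies in the $q$-orthogonal complement of $T_xX_0$ inside $V_0^{\mathbb R}$, and the fibre over $x$ has dimension at most $\dim V_0^{\mathbb R}-\dim X_0$. Hence the projection of the incidence set to $V_0^{\mathbb R}$ has dimension $<\dim V_0^{\mathbb R}$. For $v_0$ outside this projection and outside the non-generic locus in the hypothesis, every HD critical point lies in the good locus, and the argument above applies.

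A subtlety to address is points of $B$ where the dimension of $T_xX_0$ jumps. I would handle these by also excluding $X_0\setminus X_0^{\mathrm{reg}}$, using that critical points are required to be smooth by definition.
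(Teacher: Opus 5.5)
Your proof is the paper's argument: $v_0-x\perp_q \mathfrak g\cdot x\subseteq T_xX$, then $q$-skewness of $\mathfrak g$ gives $x\perp_q T_{v_0}(G\cdot v_0)$, the orthogonal splitting forces $x\in V_0^{\R}$, and $T_xX_0\subseteq T_xX$ finishes. You also make explicit the step $q(x,\xi x)=0$ that the paper leaves implicit, and you get $\mathfrak g\cdot x\subseteq T_xX$ directly from $G$-stability rather than writing $x=g\cdot x_0$ and invoking Lemma~\ref{TSG}, which is slightly cleaner.

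Your genericity paragraph addresses something the paper never checks, namely that $x\in X_0^{\mathrm{reg}}$, but as written it is only a sketch. The fibre bound $\dim V_0^{\R}-\dim X_0$ needs $\dim T_xX_0\ge\dim X_0$ at every $x\in B$, which is not automatic (for example on lower-dimensional components of $X_0$). Also, excluding $X_0\setminus X_0^{\mathrm{reg}}$ ``by definition'' is circular, because the definition of an HD critical point only gives smoothness in $X$, not in $X_0$.
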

\begin{proof}
Let x be an HD critical point for $v_0$ on X. Then $\mathrm{Re} <x-v_0,T_{x}X>=0$ and also we have that there exists $x_0$ such that  $T_x X = g \cdot T_{x_0} X = g \cdot T_{x_0} X_0 + g \cdot T_{x_0} (G x_0) = T_x X_0^g + T_x (G x_0)$. Since $T_x(G \cdot x_0) = g \cdot (\mathfrak{g} \cdot x_0) = \{ g \cdot (\xi \cdot x_0) \mid \xi \in \mathfrak{g} \} = \mathfrak{g} \cdot x$ we have $x-v_0 \perp \mathfrak{g} \cdot x $.    Then we have $v_0\perp \mathfrak{g} \cdot x $ and since G is a unitary representation we have $\mathrm{Re} <\mathfrak{g} \cdot x,v_0>=-\mathrm{Re} <x,\mathfrak{g}.v_0>=0. $ Since $v_0$ is sufficiently general in $V_0^\mathbb{R}$, the vector space $V^\mathbb{R}$ is the orthogonal direct sum of $V_0^\mathbb{R}$ and $T_{v_0}Gv_0$, and therefore $x \in V_0^\mathbb{R}$. Since $v_0 - x \perp T_xX \supseteq T_xX_0$, we find that $x \in X_0$ is a real ED critical point for $v_0$.

\end{proof}
By \ref{hdcrit} and \ref{dense} we may assume that the sufficiently general point on $V^\mathbb{R}$ is in fact a sufficiently general point $v_0$ on $V_0^\mathbb{R}$. The previous two lemmas now tell us that the HD critical points for $v_0$ on $X$ and on $X_0$ are the same. \textbf{Hence the HD degree of $X$ in $V^\mathbb{R}$ is the $\mathbb{R}$ED degree of $X_0$ in $V_0^\mathbb{R}$  }.\\

We now explain how Theorem~\ref{main} fits into the general slicing principle of
Theorem~\ref{dbrevised}.
Throughout we assume $n \le t$ and we consider the left right unitary action
\[
G := U(n)\times U(t) , V := \mathbb{C}^{n\times t},
\qquad (U,V)\cdot A := U A V^{*}.
\]

Define the (real) diagonal slice
\[
V_{0}^{\mathbb{R}}
:= \left\{ \mathrm{diag}(x)\in \mathbb{C}^{n\times t} : x\in \mathbb{R}^{n}\right\},
\]

Let $v_{0}=\mathrm{diag}(y)\in V_{0}^{\mathbb{R}}$ with $y_{1},\dots,y_{n}$ nonzero and
pairwise distinct. Then the tangent space to the $G$ orbit at $v_{0}$ is
\[
T_{v_{0}}(G\cdot v_{0})
=\left\{ Z_{1}\mathrm{diag}(y)+\mathrm{diag}(y)Z_{2}^{*} \;:\;
Z_{1}\in \mathfrak{u}(n),\ Z_{2}\in \mathfrak{u}(t)\right\}.
\]
Moreover $V^{\mathbb{R}}$ splits $q$ orthogonally as
\begin{equation}\label{eq:slice-splitting}
V^{\mathbb{R}} \;=\; V_{0}^{\mathbb{R}} \ \oplus_{\mathbb{R}}\ T_{v_{0}}(G\cdot v_{0}).
\end{equation}
Indeed, for any $a\in \mathbb{R}^{n}$ and any $Z_{1}\in \mathfrak{u}(n)$,
\[
q\!\left(\mathrm{diag}(a),\, Z_{1}\mathrm{diag}(y)\right)
= \mathrm{Re}\,\mathrm{Tr}\!\left(\mathrm{diag}(a)\big(Z_{1}\mathrm{diag}(y)\big)^{*}\right)
= \mathrm{Re}\,\mathrm{Tr}\!\left(\mathrm{diag}(a)\mathrm{diag}(y)Z_{1}^{*}\right)=0,
\]
since $\mathrm{diag}(a)\mathrm{diag}(y)$ is Hermitian and $Z_{1}^{*}$ is skew Hermitian, hence
the trace is purely imaginary. The same argument shows
$q(\mathrm{diag}(a),\, \mathrm{diag}(y)Z_{2}^{*})=0$ for all $Z_{2}\in \mathfrak{u}(t)$.
Therefore
\[
V_{0}^{\mathbb{R}} \subseteq \bigl(T_{v_{0}}(G\cdot v_{0})\bigr)^{\perp_q}.
\]

To prove \eqref{eq:slice-splitting}, it remains to show the reverse inclusion.
Let $A\in V^{\mathbb{R}}$ satisfy $q(A,\,T_{v_0}(G\cdot v_0))=0$.  Since
\[
T_{v_{0}}(G\cdot v_{0})
=\{\, Z_{1}\mathrm{diag}(y) + \mathrm{diag}(y)Z_{2}^{*}
:\ Z_{1}\in\mathfrak{u}(n),\ Z_{2}\in\mathfrak{u}(t)\,\},
\]
we obtain for all $Z_1\in\mathfrak{u}(n)$ and $Z_2\in\mathfrak{u}(t)$ the two conditions
\[
0 = q\!\left(A,\,Z_1\mathrm{diag}(y)\right)
= \mathrm{Re}\,\mathrm{Tr}\!\left(A\,\mathrm{diag}(y)^{*} Z_1^{*}\right),
\qquad
0 = q\!\left(A,\,\mathrm{diag}(y)Z_2^{*}\right)
= \mathrm{Re}\,\mathrm{Tr}\!\left(\mathrm{diag}(y)^{*}A\, Z_2\right).
\]
By Lemma~\ref{skew}(1), the first family implies that $A\,\mathrm{diag}(y)^{*}$ is Hermitian,
and the second implies that $\mathrm{diag}(y)^{*}A$ is Hermitian.  Since $v_0=\mathrm{diag}(y)$
has nonzero diagonal entries with pairwise distinct $|y_i|^{2}$, Lemma~\ref{RD} yields that
$A$ must be a real diagonal matrix (in particular $A\in V_{0}^{\mathbb{R}}$). Hence
\[
\bigl(T_{v_{0}}(G\cdot v_{0})\bigr)^{\perp_q} \subseteq V_{0}^{\mathbb{R}}.
\]
Therefore $\bigl(T_{v_{0}}(G\cdot v_{0})\bigr)^{\perp_q}=V_{0}^{\mathbb{R}}$, and the
$q$ orthogonal direct sum decomposition \eqref{eq:slice-splitting} follows.

Now let $M\subseteq \mathbb{C}^{n\times t}$ be a $G$ stable real algebraic variety (i.e.\ a
unitary invariant matrix variety), and define
\[
S:=\sigma(M)=\{x\in \mathbb{R}^{n}:\ \mathrm{diag}(x)\in M\}.
\]
Set $X:=M$ and $X_{0}:=X\cap V_{0}^{\mathbb{R}}$. By construction we have
\[
X_{0}=\{\mathrm{diag}(x): x\in S\},
\]
so under the identification $V_{0}^{\mathbb{R}}\cong \mathbb{R}^{n}$ given by
$x\leftrightarrow \mathrm{diag}(x)$, the slice $X_{0}$ corresponds exactly to $S$.

Finally, since $M$ is unitary invariant, every $A\in M$ admits an SVD
$A=U\,\mathrm{diag}(\sigma(A))\,V^{*}$ with $U\in U(n)$, $V\in U(t)$, and
$\mathrm{diag}(\sigma(A))\in M$. Hence
\[
M = G\cdot X_{0},
\]
so $G\cdot X_{0}$ is dense in $X=M$. Therefore all hypotheses of
Theorem~\ref{dbrevised} are satisfied (for generic $v_{0}\in V_{0}^{\mathbb{R}}$), and we obtain
\[
\mathrm{HDdeg}_{V}(M)=\mathrm{\mathbb{R}EDdeg}_{V_{0}^{\mathbb{R}}}(X_{0}).
\]
\begin{remark}
This is the same equality as in Theorem~\ref{main}, since
$\mathrm{\mathbb{R}EDdeg}_{V_{0}^{\mathbb{R}}}(X_{0})$ coincides with the real Euclidean distance degree of $S$.
Indeed, on $V_{0}^{\mathbb{R}}$ (the space of real diagonal matrices), the form $q$ restricts to the standard Euclidean inner product:
for $A,B\in V_{0}^{\mathbb{R}}$ we have
\[
\mathrm{Re}\!\big(\mathrm{Tr}(AB^{*})\big)=\mathrm{Tr}(AB^{t}),
\]
so the associated squared norm is $A\mapsto \mathrm{Tr}(AA^{t})$.
\end{remark}
We also show how Theorem \ref{dbrevised} can be used to study Critical points of rank one Hermitian matrices in the next example

Let $G = U(n)$ act on $V$ by conjugation,
\[
g \cdot A = g A g^{\ast}.
\]
This action is unitary since
$\mathrm{Tr}(gAg^{\ast}(gBg^{\ast})^{\ast}) = \mathrm{Tr}(AB^{\ast})$,
so the real part of $q$ defines a $G$ invariant Euclidean structure on $V^{\mathbb{R}}$. 

Define 
\[
V_{0} = \{ \mathrm{diag}(\lambda_1,\dots,\lambda_n) : \lambda_i \in \mathbb{C} \},
\]
the space of diagonal complex matrices.  
For a sufficiently general element
$v_{0} = \mathrm{diag}(\lambda_1,\dots,\lambda_n) \in V_{0}$ with $\lambda_i \neq \lambda_j$ 
for $i \neq j$, the tangent space to the $G$ orbit is
\[
T_{v_0}(G v_0)
= \{ [H,v_0] : H \in \mathfrak{u}(n) \}
= \{ A \in \mathbb{C}^{n\times n} : A \text{ is off diagonal  } \}.
\]
Since diagonal and off diagonal matrices are orthogonal with respect to 
$\mathrm{Re}  (q)$, we have an orthogonal direct sum decomposition
\[
V^{\R} = V_0^\mathbb{R}  \;\oplus _\mathbb{R}\; T_{v_0}(G v_0).
\]

Let $X \subset V$ be the variety of rank one Hermitian matrices,

This is a closed $G$ stable subvariety of $V$.  Its intersection with the slice
$V_0$ is
\[
X_0 = X \cap V_0^{\R}
= \{ \mathrm{diag}(0,\dots,0,\lambda,0,\dots,0) : \lambda \in \mathbb{R} \},
\]
the union of the coordinate axes in $V_0$.  

Every rank one Hermitian matrix is unitarily diagonalizable, hence for each
$X = vv^{\ast} \in X$ there exists $g \in U(n)$ such that 
$gXg^{\ast} \in X_0$.  
Therefore,
\[
G \cdot X_0 = X,
\]
so $G X_0$ is dense in $X$.

All hypotheses of the  Theorem are thus satisfied, and we conclude that
\[
\mathrm{HDdeg}_{V}(X) = \mathrm{\mathbb{R}EDdeg}_{V_0^\mathbb{R}}(X_0).
\]

\newpage
\bibliographystyle{amsalpha}
\bibliography{references}

\end{document}